\newcommand{\rd}{\mathrm{d}}
\newcommand{\dom}{\mathrm{dom}}
\newtheorem{theorem}{Theorem}[section]
\newtheorem{lemma}[theorem]{Lemma}
\newtheorem{proposition}[theorem]{Proposition}
\newtheorem{remark}[theorem]{Remark}
\numberwithin{equation}{section}
\begin{document}
\title{Well-Posedness of the Coagulation-Fragmentation Equation with Size Diffusion} 
%
\author{Philippe Lauren\c{c}ot}
\address{Institut de Math\'ematiques de Toulouse, UMR~5219, Universit\'e de Toulouse, CNRS \\ F--31062 Toulouse Cedex 9, France}
\email{laurenco@math.univ-toulouse.fr}
\author{Christoph Walker}
\address{Leibniz Universit\"at Hannover\\ Institut f\"ur Angewandte Mathematik \\ Welfengarten 1 \\ D--30167 Hannover\\ Germany}
\email{walker@ifam.uni-hannover.de}
%
\keywords{coagulation-fragmentation - size diffusion - well-posedness - semigroup - global existence}
\subjclass{45K05 - 47D06 - 47J35 - 47N50}
\date{\today}
%
\begin{abstract}
Local and global well-posedness of the coagulation-fragmentation equation with size diffusion are investigated. Owing to the semilinear structure of the equation, a semigroup approach is used, building upon generation results previously derived for the linear fragmentation-diffusion operator in suitable weighted $L_1$-spaces.
\end{abstract}
%
\maketitle
%
%
\pagestyle{myheadings}
\markboth{\sc{Ph.~Lauren\c{c}ot \& Ch.~Walker}}{\sc{Coagulation-Fragmentation Equation with Size Diffusion}}
%
%
\section{Introduction}\label{S.1}

The coagulation-fragmentation equation with size diffusion 
\begin{subequations}\label{FD.0}
	\begin{align}
		\partial_t \phi(t,x) & = D \partial_x^2 \phi(t,x) +  \mathcal{F}(\phi(t,\cdot))(x) + \mathcal{K}(\phi(t,\cdot))(x) \,, \qquad (t,x)\in (0,\infty)^2\,, \label{FD.1} \\
		\phi(t,0) & = 0\,, \qquad t>0\,, \label{FD.2} \\
		\phi(0,x) & = f(x)\,, \qquad x\in (0,\infty)\,, \label{FD.3} 
	\end{align}
\end{subequations}
where
\begin{equation*}
	\mathcal{F}(\phi)(x):=- a(x) \phi(x) + \int_x^\infty a(y) b(x,y) \phi(y)\ \mathrm{d}y
\end{equation*}
and $\mathcal{K}(\phi):=\mathcal{K}(\phi,\phi)$ with
\begin{equation*}
\mathcal{K}(\phi,\psi)(x):=\frac{1}{2}\int_0^x k(y,x-y) \phi(y)\psi(x-y)\,\mathrm{d}y-\phi(x)\int_0^\infty k(x,y)\psi(y)\,\mathrm{d}y\,,
\end{equation*}
describes the dynamics of the size distribution function $\phi=\phi(t,x)\ge 0$ of particles of size $x\in (0,\infty)$ at time $t>0$. Particles modify their sizes according to three different mechanisms: random fluctuations, here accounted for by size diffusion at a constant diffusion rate $D>0$ (hereafter normalized to $D=1$), spontaneous fragmentation with overall fragmentation rate $a\ge 0$ and daughter distribution function $b\ge 0$, and binary coalescence with coagulation kernel $k\ge 0$. Nucleation is not taken into account in this model, an assumption which leads to the homogeneous Dirichlet boundary condition \eqref{FD.2} at $x=0$. Let us recall that the coagulation-fragmentation equation without size diffusion, corresponding to setting $D=0$ in \eqref{FD.1}, arises in several fields of physics (grain growth, aerosol and raindrops formation, polymer and colloidal chemistry) and biology (hematology, animal grouping) and has been studied extensively in the mathematical literature since the pioneering works by Melzak \cite{Mel57} and McLeod \cite{McL64}, see the books and review articles \cite{Ald1999, Ban2006, BLL2020a, BLL2020b, Ber2006, Dub94, LaMi2004, Ram2000, Wat2006} and the references therein. The fragmentation equation with size diffusion ($k=0$) has been introduced more recently to describe the growth of microtubules \cite{FHL1994}, plankton cells \cite{BWWvB2004}, and ice crystals \cite{FJMOS2003, MFJLODS2004}. That merging could also play a role in the dynamics of the latter is suggested and investigated in \cite{FJMO2006, OFJM2005} and the aim of this paper is to provide some mathematical insight into this model. Specifically, we shall study the well-posedness of \eqref{FD.0} in a suitable functional setting for a class of rate coefficients $a$, $b$, and $k$ which will be described below.

To this aim, let us first recall that, owing to the semilinear structure of \eqref{FD.1}, a classical approach to well-posedness relies on semigroup theory and involves two steps: one first shows that the linear part of  \Cref{FD.0} (including size diffusion and fragmentation, along with the boundary condition \eqref{FD.2}) generates a semigroup in a suitable function space. One subsequently establishes the well-posedness of \eqref{FD.0} by a fixed point procedure applied to the associated Duhamel formula taking into account Lipschitz properties of the coagulation term. This approach has proven successful for the coagulation-fragmentation equation ($D=0$) since the seminal work of Aizenman \& Bak \cite{AiBa1979}, see \cite{Ban2020, BaLa2012, BLL2013, BLL2020b, MLM1997} and the references therein. It also leads to the well-posedness of coagulation-fragmentation equations with growth/decay (where the diffusion $D\partial_x^2 f$ is replaced by $\mp \partial_x(gf)$ for some growth rate $g=g(x)\ge 0$), see \cite{BaLa2020}.

Coming back to the coagulation-fragmentation equation with size diffusion \eqref{FD.0}, we have performed a rather complete study of the generation properties of its linear part in \cite{LaWa2021} and shown that it generates a positive analytic semigroup in $L_1((0,\infty),(x+x^m)\mathrm{d}x)$ for any $m>1$, thereby setting the stage for the study of the well-posedness of \eqref{FD.0} which is our main concern herein. Beforehand, let us set up some notation and make precise the assumptions on the  rate coefficients $a$, $b$, and $k$ that we shall use throughout the paper.

\subsection*{Notation and Assumptions}

We suppose that the overall fragmentation rate $a$ satisfies
\begin{equation}
a\in L_{\infty,loc}([0,\infty))\,, \qquad a\ge 0 \;\text{ a.e. in }\; (0,\infty)\,. \label{A.0}
\end{equation}
The daughter distribution function $b$ is a non-negative measurable function on $(0,\infty)^2$ satisfying
\begin{equation}
	\int_0^y x b(x,y)\ \mathrm{d}x = y\,, \qquad y\in (0,\infty)\,, \label{B.0}
\end{equation}
and there is $\delta_2\in (0,1)$ such that
\begin{equation}
	(1-\delta_2) y^2 \ge \int_0^y x^2 b(x,y)\ \mathrm{d}x\,, \qquad y\in (0,\infty)\,. \label{B.10}
\end{equation}
Recall that \eqref{B.0} guarantees that there is no loss of matter during fragmentation, while \eqref{B.10} implies that the distribution of the sizes of the fragments resulting from the breakup of a particle of size $y$ is not too concentrated around $y$. As for the coagulation kernel $k$, we assume that there are $0\le \theta_0 < \theta \le 1$, $m>1$, and $k_*>0$ such that
\begin{subequations}\label{K1L}
\begin{equation}\label{K1}
	0\le k(x,y)=k(y,x)\le k_*\frac{\ell(x)\ell(y)}{x+y+(x+y)^m}\,,\qquad (x,y)\in (0,\infty)^2\,,
\end{equation}
where
\begin{equation}
	\ell(x):=\left\{
	\begin{array}{ll} 
		x^{1-2\theta_0}\,, & x\in (0,1)\,,\\
		(1+a(x))^\theta x^m\,, & x>1\,. 
	\end{array}\right. \label{L}
\end{equation}
\end{subequations}
As usual for coagulation-fragmentation equations, the analysis is performed in weighted $L_1$-spaces, which we introduce next. For $r\in\mathbb{R}$, we set
\begin{equation*}
	X_r := L_1\big( (0,\infty),x^r\mathrm{d}x \big)\,, \qquad \|f\|_{X_r} := \int_0^\infty x^r |f(x)|\ \mathrm{d}x\,, \quad f\in X_r\,,
\end{equation*} 
and define the moment $M_r(f)$ of order $r$ of $f$ by
\begin{equation*}
M_r(f) := \int_0^\infty x^r\ f(x)\ \mathrm{d}x\,,
\end{equation*}
so that $\|f\|_{X_r}=M_r(|f|)$. We then define the following spaces, which are at the heart of our analysis, namely
\begin{equation*}
	E_0:=X_1\cap X_m=L_1\big( (0,\infty),(x+x^m)\mathrm{d}x \big)
\end{equation*}
equipped with the norm $\|\cdot\|_{E_0} := \|\cdot\|_{X_1} + \|\cdot\|_{X_m}$ and 
\begin{equation}
	Y := L_1\big((0,\infty),\ell(x)\mathrm{d}x\big) = X_{1-2\theta_0} \cap L_1\big((0,\infty),(1+a(x))^\theta (x+x^m)\mathrm{d}x\big)\,, \label{SY}
\end{equation} 
with norm
\begin{equation*}
	\|f\|_Y := \int_0^\infty \ell(x) |f(x)|\ \mathrm{d}x\,, \qquad f\in Y\,,
\end{equation*}
recalling that the parameters $(\theta_0,\theta,m)$ and the weight $\ell$ are defined in \eqref{K1} and \eqref{L}, respectively.

\medskip

We next introduce the linear operator
\begin{equation*}
	\mathbb{A} f:=\partial_x^2 f +\mathcal{F}(f)\,,\qquad f\in \dom(\mathbb{A})\,,
\end{equation*}
where
\begin{equation*}
	\dom(\mathbb{A}) := \{ f\in E_0\ :\ \partial_x^2 f\in E_0\,, \ af \in E_0\,, \ f(0)=0\}\,,
\end{equation*}	
and we define the graph norm of $f\in \dom(\mathbb{A})$ by 
\begin{equation*}
	\|f\|_{\mathbb{A}} := \|f\|_{E_0} + \|\partial_x^2 f\|_{E_0} + \|af\|_{E_0}\,.
\end{equation*} 
The operator $\mathbb{A}$ includes the linear terms on the right-hand side of \eqref{FD.1} (diffusion + fragmentation) and generates an analytic semigroup in $E_0$ \cite{LaWa2021}. We shall recall its properties later, wherever needed. Setting 
\begin{equation*}
	E_1 := (\dom(\mathbb{A}),\|\cdot\|_{\mathbb{A}})\,,
\end{equation*} 
we finally introduce the complex interpolation spaces
\begin{equation}
	E_\xi := \big[E_0,E_1\big]_{\xi}\,,\qquad \xi\in (0,1)\,, \label{IS}
\end{equation}
and denote the corresponding norm by $\|\cdot\|_{E_\xi}$. The positive cone $E_\xi^+$ of $E_\xi$ is then
\begin{equation*}
	E_\xi^+ := \{ f\in E_\xi\ :\ f\ge 0 \;\;\text{a.e. in}\; (0,\infty)\}\,.
\end{equation*}

\bigskip

Now, the coagulation-fragmentation equation with size diffusion \eqref{FD.0} can be reformulated as the semilinear Cauchy problem
\begin{equation}\label{CP}
\frac{\mathrm{d}\phi}{\mathrm{d}t}(t)=\mathbb{A}\phi(t)+\mathcal{K}(\phi(t))\,,\quad t>0\,,\qquad \phi(0)=f\,,
\end{equation}
and we first prove that \eqref{CP} is locally well-posed in $E_\xi$ for appropriate values of $\xi$.

\begin{theorem}\label{th1}
	Suppose that the rate coefficients $a$, $b$, and $k$ satisfy \eqref{A.0}, \eqref{B.0}, \eqref{B.10}, and \eqref{K1} with parameters $0\le \theta_0 <\theta< 1$ and $m>1$, and consider $\xi\in [0,1)$ with $2\theta<1+\xi$.
	
	\medskip
	
	\noindent{\bf (a) Local Existence:} Given $f\in E_\xi$,  the Cauchy problem~\eqref{CP} has a unique maximal strong solution 
	\begin{equation*}
		\phi = \phi(\cdot;f)\in C\big([0,t^+(f)),E_\xi\big)\cap C\big((0,t^+(f)),E_1\big)\cap C^1\big((0,t^+(f)),E_0\big)
	\end{equation*}
	with $t^+(f)\in (0,\infty]$ such that, if $\xi<\theta$, then
	\begin{equation*}
		\lim_{t\to 0} t^{\theta-\xi}\|\phi(t)\|_{E_\theta}=0\,.
	\end{equation*} 
	Moreover,
	\begin{equation}
		M_1\left(\phi(t) \right) = M_1(f)\,, \qquad t\in [0,t^+(f))\,. \label{M.102}
	\end{equation} 
	\medskip
	
	\noindent{\bf (b) Continuous Dependence:} The mapping $[(t,f)\mapsto \phi(t;f)]$ defines a semiflow on $E_\xi$.\medskip
	
	\noindent{\bf (c) Positivity:} If $f\in E_\xi^+$, then $\phi(t;f)\ge 0$ for $t\in [0,t^+(f))$. \medskip
	
	\noindent{\bf (d) Global Existence Criterion:} Let $f\in E_\xi$. If there are $0<t_0<T\wedge t^+(f)$ such that
	\begin{equation}\label{global}
		\sup_{t_0<t< T\wedge t^+(f)}\| \phi(t)\|_{Y}<\infty\,,
	\end{equation}
	then  $t^+(f)\ge T$.
	In particular, $t^+(f)=\infty$ if \eqref{global} holds true for arbitrary $0<t_0<T$. Furthermore, if there is $K_*>0$ such that
	\begin{equation}\label{K1b}
		0\le k(x,y)\le K_*\frac{\ell(x) (y+y^m)+ \ell(y) (x+x^m)}{x+y+(x+y)^m}\,,\qquad (x,y)\in (0,\infty)^2\,,
	\end{equation}
	and
	\begin{equation}\label{global1bb}
		\sup_{0<t<T\wedge t^+(f)}\| \phi(t)\|_{E_0}<\infty\,,\qquad T>0\,,
	\end{equation}
	then $t^+(f)=\infty$.
\end{theorem}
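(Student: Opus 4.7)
The overall approach is semigroup-theoretic. Since $\mathbb{A}$ generates a positive analytic semigroup on $E_0$ by \cite{LaWa2021}, I recast \eqref{CP} via Duhamel's formula as
\begin{equation*}
	\phi(t) = e^{t\mathbb{A}}f + \int_0^t e^{(t-s)\mathbb{A}}\mathcal{K}(\phi(s))\,\mathrm{d}s\,,
\end{equation*}
and solve it by Banach's fixed point theorem in a closed ball of $C([0,T],E_\xi)\cap \{\phi\,:\,t^{\theta-\xi}\phi\in C([0,T],E_\theta)\}$ for $T$ small (the weighted piece being needed only when $\xi<\theta$). The key estimate is a bilinear bound $\|\mathcal{K}(\phi_1,\phi_2)\|_{E_0} \le C\|\phi_1\|_Y\|\phi_2\|_Y$, obtained by inserting \eqref{K1} into the definition of $\mathcal{K}$ and applying Fubini, together with the continuous embedding $E_\theta\hookrightarrow Y$. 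The latter is a consequence of interpolating $E_0$ and $E_1$ in the $(1+a)^\theta$-weight direction, combined with a Sobolev-type embedding handling the $x^{1-2\theta_0}$ weight near the origin (using $f(0)=0$ and $\partial_x^2 f\in E_0$ when $f\in E_1$). The analytic smoothing bound $\|e^{s\mathbb{A}}g\|_{E_\theta}\le Cs^{\xi-\theta}\|g\|_{E_\xi}$ then reduces the contractivity of the fixed-point map to the finiteness of the beta-type integral $\int_0^t(t-s)^{\xi-\theta}s^{-2(\theta-\xi)}\,\mathrm{d}s$, and this is guaranteed exactly by the hypothesis $2\theta<1+\xi$. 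Standard semilinear regularity then yields the claimed regularity class of the unique maximal solution; the semiflow property \textbf{(b)} follows at once from the contractive fixed-point construction and uniqueness.

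Mass conservation \eqref{M.102} is obtained by multiplying \eqref{FD.1} by $x$ and integrating over $(0,\infty)$, which is legitimate for $t\in(0,t^+(f))$ since $\phi(t)\in E_1$: the diffusion contribution reduces to boundary terms vanishing by $\phi(t,0)=0$ and by decay at infinity, the fragmentation contribution vanishes by Fubini and \eqref{B.0}, and the coagulation contribution vanishes after symmetrizing in $(x,y)$. For positivity \textbf{(c)}, I decompose $\mathcal{K}(\phi)=\mathcal{K}_+(\phi)-\phi\,\mathcal{L}(\phi)$ with $\mathcal{K}_+(\phi)(x):=\tfrac{1}{2}\int_0^x k(y,x-y)\phi(y)\phi(x-y)\,\mathrm{d}y$ and $\mathcal{L}(\phi)(x):=\int_0^\infty k(x,y)\phi(y)\,\mathrm{d}y$, and set up the monotone iteration
\begin{equation*}
	\partial_t\phi^{(n+1)} = \bigl(\mathbb{A}-\mathcal{L}(\phi^{(n)})\bigr)\phi^{(n+1)} + \mathcal{K}_+(\phi^{(n)})\,, \qquad \phi^{(n+1)}(0)=f\,.
\end{equation*}
Each step is a linear non-autonomous equation whose evolution system is positivity-preserving, since $\mathbb{A}$ generates a positive semigroup and $-\mathcal{L}(\phi^{(n)})$ is a non-positive multiplication operator; starting from $\phi^{(0)}:=0$ and $f\ge 0$, an induction gives $\phi^{(n)}\ge 0$, and convergence $\phi^{(n)}\to\phi$ (established by the same contraction as in \textbf{(a)}) transfers positivity to $\phi$; continuation arguments then propagate positivity up to $t^+(f)$.

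For the global existence criterion \textbf{(d)}, if $\|\phi(t)\|_Y$ is bounded on $(t_0,T\wedge t^+(f))$, the bilinear estimate bounds $\|\mathcal{K}(\phi(t))\|_{E_0}$ uniformly there, and a Duhamel argument starting from some time slightly larger than $t_0$ combined with the analytic smoothing of $e^{t\mathbb{A}}$ rules out blow-up of $\|\phi(t)\|_{E_\xi}$, forcing $t^+(f)\ge T$. For the sharper statement, under \eqref{K1b} I propagate a bound on $\|\phi\|_Y$ from the assumed bound on $\|\phi\|_{E_0}$ via the moment identity
\begin{equation*}
	\frac{\mathrm{d}}{\mathrm{d}t}\|\phi\|_Y = \int_0^\infty \ell\,\partial_x^2\phi\,\mathrm{d}x + \int_0^\infty \ell\,\mathcal{F}(\phi)\,\mathrm{d}x + \tfrac{1}{2}\iint[\ell(x+y)-\ell(x)-\ell(y)]k(x,y)\phi(x)\phi(y)\,\mathrm{d}x\,\mathrm{d}y\,.
\end{equation*}
Under \eqref{K1b}, the off-diagonal numerator $\ell(x)(y+y^m)+\ell(y)(x+x^m)$ combines with the denominator $x+y+(x+y)^m$ so that the coagulation integral is dominated by a constant multiple of $\|\phi\|_{E_0}\|\phi\|_Y$; the fragmentation contribution provides a favourable sign via \eqref{B.10}; and the diffusion term is non-positive after integration by parts, using $\phi(t,0)=0$ together with the essentially non-decreasing character of $\ell$. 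A Gronwall argument then yields a bound on $\|\phi(t)\|_Y$ on every bounded subinterval of $(0,t^+(f))$, and the first part of \textbf{(d)} concludes. \textbf{The main obstacle} will be this last moment estimate: the irregularity of $\ell$ at $x=1$ in \eqref{L} forces an approximation of $\ell$ by smooth weights to justify the integration by parts in the diffusion term, and the interplay between fragmentation damping via \eqref{B.10} and the coagulation production requires a delicate bookkeeping in order for the right-hand side to close linearly in $\|\phi\|_Y$.
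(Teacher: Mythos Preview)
Your treatment of parts \textbf{(a)}, \textbf{(b)}, \textbf{(c)}, and the first half of \textbf{(d)} is essentially the paper's approach: time-weighted fixed point in $C_{\theta-\xi}((0,T],E_\theta)$, the bilinear bound $\|\mathcal{K}(\phi,\psi)\|_{E_0}\le C\|\phi\|_Y\|\psi\|_Y$, and the embedding $E_\theta\hookrightarrow Y$. For positivity the paper uses a fixed absorption potential $V(x)=\gamma\ell(x)/(x+x^m)$ dominating $\mathcal{L}(\psi)$ uniformly for $\|\psi\|_{E_\theta}\le R$ (so the generator stays autonomous and the positivity of the perturbed semigroup is checked once via resolvent positivity and Kato's inequality), whereas your iteration uses the time-dependent absorption $\mathcal{L}(\phi^{(n)}(t))$; both are standard, but note that your iterates are \emph{not} the Picard iterates of the contraction in \textbf{(a)}, so their convergence needs a separate (easy) argument.

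The genuine gap is in the second half of \textbf{(d)}. Your plan is to differentiate $\int_0^\infty \ell\,\phi\,\mathrm{d}x$ and close a Gronwall inequality. This does not work here, for three independent reasons. First, $\ell(x)=(1+a(x))^\theta x^m$ for $x>1$ with $a$ merely in $L_{\infty,loc}$, so $\ell$ has no weak second derivative and the diffusion term $\int \ell\,\partial_x^2\phi$ cannot be integrated by parts; smoothing $\ell$ does not help because the limit would require control of $\ell''$, which simply does not exist. Second, even formally, the sign you claim for the diffusion term would require concavity of $\ell$, not monotonicity; and $\ell$ is certainly not concave for large $x$. Third, the fragmentation and coagulation identities involve $\int_0^y \ell(x)b(x,y)\,\mathrm{d}x$ and $\ell(x+y)-\ell(x)-\ell(y)$, both of which contain $(1+a(\cdot))^\theta$ evaluated at points unrelated to $x$ or $y$; with no structural hypothesis on $a$, neither \eqref{B.10} nor \eqref{K1b} gives the bounds you assert.

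The paper avoids all of this by never differentiating $\|\phi\|_Y$. Instead, \eqref{K1b} yields the \emph{refined} bilinear estimate
\[
\|\mathcal{K}(\psi,\phi)\|_{E_0}\le C\bigl(\|\psi\|_Y\|\phi\|_{E_0}+\|\psi\|_{E_0}\|\phi\|_Y\bigr),
\]
so that an $E_0$-bound on $\phi$ turns $\mathcal{K}(\phi)$ into a term linear in $\|\phi\|_Y\le C\|\phi\|_{E_\theta}$. Plugging this into Duhamel gives
\[
\|\phi(t)\|_{E_\theta}\le C t^{-\theta}\|f\|_{E_0}+C\int_0^t (t-s)^{-\theta}\|\phi(s)\|_{E_\theta}\,\mathrm{d}s,
\]
and the singular Gronwall inequality delivers a bound on $\|\phi(t)\|_{E_\theta}$, hence on $\|\phi(t)\|_Y$, reducing to the first half of \textbf{(d)}. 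This is purely functional-analytic and uses nothing about $\ell$ beyond the embedding $E_\theta\hookrightarrow Y$ already established.
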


A striking difference between \Cref{th1} and similar results on coagulation-fragmentation equations (possibly with growth or decay, but without size diffusion) is the underlying functional framework \cite{AiBa1979, BaLa2012, BaLa2020, BLL2020b, MLM1997}. Indeed, coping with the coagulation term $\mathcal{K}(\phi)$ requires in general that $\phi$ belongs to $L_1((0,\infty),(1+x^m)\mathrm{d}x)$, a property which is provided here by the diffusion when starting from the smaller space $E_0$.

\begin{remark}\label{rem1}
	Unfortunately, we do not have a precise characterization of the space $E_\xi$ for $\xi\in (0,1)$, besides some embedding in a weighted $L_1$-space as in \Cref{L1} below. Nevertheless, if $\theta\in (0,1/2)$, then we can take $\xi=0$ in \Cref{th1}, which then provides the well-posedness of \eqref{FD.0} in $E_0=L_1((0,\infty),(x+x^m)\mathrm{d}x)$. In the same vein, the global existence criterion~\eqref{global1bb} only requires an a priori estimate in the norm of $E_0$.
\end{remark}

The proof of \Cref{th1} relies on a more or less standard approach, involving time-weighted spaces, see \cite{Wei1980}, \cite[Chapter~4]{Yag2010} (and, e.g.,  \cite{AmWa2005} for an application of this approach to the coagulation-fragmentation equation with spatial diffusion). It strongly relies on the above mentioned fact that the operator $\mathbb{A}$ generates a positive analytic semigroup in $E_0$, see \cite[Theorem~1.1]{LaWa2021} and \Cref{P1} below, and the Lipschitz continuity of the coagulation operator $\mathcal{K}$ in $Y$ stemming from either \eqref{K1} or \eqref{K1b}. This approach is adapted to the present situation and allows one to consider initial values in $E_\xi$ for $\xi<\theta$, bearing in mind that the coagulation term is not necessarily well-defined in that space. We provide the proof of \Cref{th1} in \Cref{sec2}.

\bigskip

Let us collect a few comments on the assumptions \eqref{K1} and \eqref{K1b} on the coagulation kernel $k$.
	
\medskip
	
\noindent{\bf (i)} If there are $\alpha\in (1/2,1]$ and $K>0$ such that
\begin{equation}
	k(x,y) \le K (1+a(x))^\alpha (1+a(y))^\alpha\,, \qquad (x,y)\in (0,\infty)^2\,, \label{K2}
\end{equation}
then $k$ satisfies \eqref{K1} with $\theta_0=1/2$, $\theta=\alpha$, and any $m>1$. The assumption~\eqref{K2} is introduced in \cite{BaLa2012, Ban2020} to study the local well-posedness of the coagulation-fragmentation equation (without diffusion), see also \cite{BLL2020b}.
	
\medskip
	
\noindent{\bf (ii)} Let $k$ be a coagulation kernel satisfying \eqref{K1} with parameters $(\theta_0,\theta,m)$. Then it also satisfies \eqref{K1} with parameters $(\theta_0,\theta,m')$ for any $m'>m$.
	
\medskip

\noindent{\bf (iii)} According to \eqref{K1}, the coagulation kernel $k$ may feature a singularity for small sizes. In particular, given $-1<\alpha \le 0 \le \beta < 1$, the choice
\begin{equation*}
	k(x,y) = \left( x^{\alpha} + y^{\alpha} \right) \left( x^\beta + y^\beta \right)\,, \qquad (x,y)\in (0,\infty)^2\,,
\end{equation*}
complies with \eqref{K1} for $\theta_0 = (1-\alpha)/2 \ge 1/2$, provided that $\big( y\mapsto y^\beta (1+a(y))^{-\theta} \big)$ is bounded on $(1,\infty)$, the parameter $m>1$ being arbitrary. This example includes Smoluchowski's celebrated coagulation kernel corresponding to $\beta = -\alpha = 1/3$ \cite{Smo1916, Smo1917}. 

\medskip

\noindent{\bf (iv)} It is easy to check that \eqref{K1b} implies \eqref{K1} with $k_*=2K_*$. 

\medskip

\noindent{\bf (v)} When $\theta_0\in [0,1/2]$, the assumption \eqref{K1b} implies that $k\in L_\infty((0,1)\times (1,\infty))$. This excludes coagulation kernels satisfying 
\begin{equation}
	k(x,y) = K \left[ 1+a(x))^\alpha  + (1+a(y))^\alpha \right]\,, \qquad (x,y)\in (0,\infty)^2\,. \label{K4}
\end{equation}
Such an assumption is known to guarantee global existence of classical solutions for the case without size diffusion, see \cite{Ban2020, BLL2020b}.

\bigskip

Building upon \Cref{th1}~{\bf (d)}, we supplement \Cref{th1} with a global existence result for a specific class of coagulation kernels.

\begin{theorem}[{\bf Global Existence}]\label{th2}
	Suppose that the rate coefficients $a$, $b$, and $k$ satisfy \eqref{A.0}, \eqref{B.0}, \eqref{B.10}, and \eqref{K1b} with parameters $0\le \theta_0 <\theta< 1$ and $m>1$ and consider $\xi\in [0,1)$ with $2\theta<1+\xi$. Assume further that $\theta_0\in [0,1/2]$ and that there is $k_0>0$ such that
	\begin{equation}
		k(x,y) \le k_0 \frac{xy}{x+y} \left[ (1+a(x))^\theta + (1+a(y))^\theta \right]\,, \qquad (x,y)\in (1,\infty)^2\,. \label{K3}
	\end{equation}
	Then $t^+(f)=\infty$ for all $f\in E_\xi^+$.
\end{theorem}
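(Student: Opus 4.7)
The strategy is to reduce global existence to the a priori estimate \eqref{global1bb} and then invoke \Cref{th1}(d). By \Cref{th1}(c) the solution is non-negative, and mass conservation \eqref{M.102} identifies $\|\phi(t)\|_{X_1}$ with the constant $M_1(f)$ on $[0, t^+(f))$. Since \eqref{K1b} holds by assumption, the only missing ingredient is a time-uniform bound on $M_m(\phi(t))$ on every interval $[0, T \wedge t^+(f))$, $T>0$.

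To derive this bound, I would test the equation against $x^m$ at times $t>0$ where $\phi(t) \in \dom(\mathbb{A})$ and obtain
\begin{equation*}
\frac{d}{dt} M_m(\phi(t)) = m(m-1)\, M_{m-2}(\phi) + \mathcal{I}_{\mathcal{F}}(\phi) + \mathcal{I}_{\mathcal{K}}(\phi),
\end{equation*}
where the diffusion contribution arises from two integrations by parts (using $\phi(t,0)=0$ and decay at infinity). By \eqref{B.0}--\eqref{B.10} together with Jensen's inequality applied to $x \mapsto x^{m-1}$ if $m\in(1,2]$, or the elementary bound $x^{m-2} \le y^{m-2}$ for $x\le y$ if $m \ge 2$, the fragmentation term satisfies $\mathcal{I}_{\mathcal{F}}(\phi) \le -\delta_m \|a\phi\|_{X_m}$ for some $\delta_m \in (0,1)$ depending only on $\delta_2$ and $m$. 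The coagulation term is then symmetrized into
\begin{equation*}
\mathcal{I}_{\mathcal{K}}(\phi) = \frac{1}{2} \iint \big[(x+y)^m - x^m - y^m\big]\, k(x,y)\, \phi(x)\phi(y)\, dx\, dy,
\end{equation*}
and the fragmentation dissipation $-\delta_m \|a\phi\|_{X_m}$ is the workhorse needed to absorb it.

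For $\mathcal{I}_{\mathcal{K}}(\phi)$, the elementary inequality $(x+y)^m - x^m - y^m \le C_m(xy^{m-1} + x^{m-1}y)$ combined with the symmetry of $k$ reduces the problem to controlling $\iint x^{m-1} y\, k(x,y)\, \phi(x)\phi(y)\, dx\, dy$. I would split this integration into $(1,\infty)^2$ and its complement. On the complement, \eqref{K1b}, together with $\ell(x) \le 1$ on $(0,1)$ (which holds since $\theta_0 \in [0,1/2]$), yields a bound in terms of $M_1(f)$, $M_m(\phi)$, and $\|a\phi\|_{X_m}$. On $(1,\infty)^2$, the sharper \eqref{K3} is used together with the elementary inequality $xy/(x+y) \le x^\alpha y^{1-\alpha}$ (valid for any $\alpha \in [0,1]$) to decouple the double integral into products of one-dimensional moments; choosing $\alpha$ suitably places high powers of one variable against $\phi$ alone. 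Young's inequality $(1+a)^\theta \le \varepsilon(1+a) + C_\varepsilon$, available precisely because $\theta < 1$, then splits every $(1+a)^\theta$-factor as $\varepsilon$ times $\|a\phi\|_{X_m}$ plus lower-order moments, so that the $\|a\phi\|_{X_m}$-contributions from coagulation can be absorbed into half of the fragmentation dissipation by choosing $\varepsilon$ small.

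Combining these estimates, and controlling the diffusion residual $M_{m-2}(\phi)$ by H\"older interpolation (using mass conservation on $(0,1)$ and $x^{m-2} \le x^m$ on $(1,\infty)$, together with the extra regularity $\phi(t) \in \dom(\mathbb{A})$ where needed), leads to a differential inequality of the form
\begin{equation*}
\frac{d}{dt} M_m(\phi(t)) + \frac{\delta_m}{2} \|a\phi(t)\|_{X_m} \le C_1 + C_2\, M_m(\phi(t)),
\end{equation*}
with constants depending only on $f$ and the rate coefficients. Gronwall's lemma then gives a time-local bound on $M_m(\phi(t))$, and \Cref{th1}(d) concludes $t^+(f) = \infty$. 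The main technical obstacle is ensuring that every $(1+a)^\theta$-factor appearing in the coagulation estimate is absorbed by a strictly smaller multiple of the fragmentation dissipation with constants depending only on $M_1(f)$ and \emph{not} on $M_m(\phi(t))$, the latter of which would only yield a Riccati-type inequality without a global-in-time bound. This is made possible by the strict inequality $\theta < 1$ in \eqref{K3} together with the harmonic-mean factor $xy/(x+y)$, which ensures that the coagulation kernel grows sufficiently slowly at infinity for the fragmentation sink to dominate.
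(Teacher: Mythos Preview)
Your overall strategy---reduce to the $E_0$-bound \eqref{global1bb}, use mass conservation for $M_1$, and derive a differential inequality for $M_m(\phi)$---is exactly the paper's. The difficulty is that the clean Gronwall inequality you state,
\[
\frac{\mathrm{d}}{\mathrm{d}t} M_m(\phi) + \frac{\delta_m}{2}\|a\phi\|_{X_m} \le C_1 + C_2\, M_m(\phi)
\]
with $C_1,C_2$ depending only on $M_1(f)$, is \emph{not} attainable when $m>2$. After symmetrizing the coagulation term and inserting \eqref{K3}, the cross term carrying $(1+a(y))^\theta$ (with the ``wrong'' variable) yields, for any admissible choice of the exponent $\alpha$ in $xy/(x+y)\le x^\alpha y^{1-\alpha}$, a product of the form
\[
\Big(\int_1^\infty x^{m-1+\alpha}\phi(x)\,\mathrm{d}x\Big)\Big(\int_1^\infty y^{2-\alpha}(1+a(y))^\theta\phi(y)\,\mathrm{d}y\Big).
\]
To make the second factor controllable by $M_1(f)$ after Young's inequality one needs $2-\alpha\le 1$, i.e.\ $\alpha\ge 1$; but then the first factor is at least $M_m(\phi)$, so the piece $\varepsilon\,M_m(\phi)\,\|a\phi\|_{X_m}$ cannot be absorbed into the fragmentation dissipation with an $\varepsilon$ independent of $M_m(\phi)$. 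Conversely, bounding the first factor by $M_1(f)$ forces $\alpha\le 2-m<0$. In the paper this manifests as the coefficient $M_{1+(r-2)_+}(\psi)=M_{m-1}(\psi)$ in \Cref{LGE3} when $r=m>2$, and is resolved not by a single Gronwall step but by a bootstrap: one first closes the estimate for $r=2$ (where the coefficient is $M_1$), then iterates $r\mapsto r+1$ up to $m$, each step using the previously obtained bound on $M_{r-1}$. Your proposal omits this iteration, and the assertion in your final paragraph that the constants depend only on $M_1(f)$ is precisely what fails for $m>2$.

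A secondary point: testing directly against $x^m$ produces the diffusion residual $m(m-1)M_{m-2}(\phi)$, and for $m\in(1,3)$ the small-size part $\int_0^1 x^{m-2}\phi\,\mathrm{d}x$ is not dominated by $M_1$ (since $x^{m-2}\ge x$ on $(0,1)$). Your appeal to ``extra regularity $\phi(t)\in\dom(\mathbb{A})$'' does give pointwise finiteness, but not a bound that closes the Gronwall loop. The paper sidesteps this by testing against a $C^1$ weight $w_r$ that behaves like $x^3$ near zero and like $x^r$ at infinity, so that the diffusion contribution is controlled by $M_1(\phi)$ and $\int_1^\infty x^{r-2}\phi\,\mathrm{d}x$ only; see \Cref{LGE2}.
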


Observe that \eqref{K3} is stronger than \eqref{K1b} for $(x,y)\in (1,\infty)^2$ (and somehow corresponds to taking $m=1$ in \eqref{K1b} and the definition~\eqref{L} of $\ell$).  Guided by \eqref{global1bb}, the proof of Theorem~\ref{th2} relies on an a priori estimate in $E_0$, which is derived in Section~\ref{sec3}.

\bigskip

From now on, we assume that the rate coefficients $a$, $b$, and $k$ satisfy \eqref{A.0}, \eqref{B.0}, \eqref{B.10}, and \eqref{K1} with fixed parameters $0\le \theta_0<\theta\le 1$ and $m>1$.

\section{Well-posedness}\label{sec2}

\subsection{Diffusion \& Fragmentation}\label{sec2.1}

In view of \eqref{CP} and according to the above discussion, we begin with the linear terms on the right-hand side of \eqref{FD.1} and first recall the generation properties of $\mathbb{A}$ in $E_0$ established in \cite{LaWa2021}.

\begin{proposition}\label{P1}
	The operator $\mathbb{A}$ generates a positive analytic semigroup $(U(t))_{t\ge 0}$ on $E_0$ with
	\begin{equation}
		M_1\left(U(t)f \right) = M_1(f)\,, \qquad t\ge 0\,,\quad f\in E_0\,. \label{M.100}
	\end{equation}
In addition, $(U(t))_{t\ge 0}$ is an analytic semigroup on $E_\xi$ for every $\xi\in (0,1)$.
\end{proposition}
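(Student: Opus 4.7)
The plan is to invoke the generation result established in \cite{LaWa2021}, whose statement covers precisely the first two assertions of Proposition~\ref{P1}. For orientation, the strategy there proceeds by first analyzing the pure diffusion operator $\partial_x^2$ endowed with the homogeneous Dirichlet boundary condition at $x=0$ on the weighted space $E_0=X_1\cap X_m$. Using the explicit Green's function for the half-line (built from the Gaussian by the method of images) together with weighted $L_1$-estimates, one verifies sectorial resolvent bounds of the form $\|(\lambda-\partial_x^2)^{-1}\|_{\mathcal{L}(E_0)}\le C/|\lambda|$ in a sector, which characterises analyticity in $E_0$, and positivity follows from the non-negativity of the Green's function.

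The fragmentation operator $\mathcal{F}$ is then incorporated as a perturbation. The loss term $-a(\cdot)f$ is a possibly unbounded dissipative multiplication operator, while the gain term $\int_\cdot^\infty a(y)b(\cdot,y)f(y)\,\rd y$ is positive; assumption \eqref{B.0} ensures that the full fragmentation operator is conservative in the $X_1$-norm, whereas \eqref{B.10} yields the strict dissipativity in $X_m$ that is needed to close the estimates in $E_0$. Combining these bounds with the resolvent estimates for the diffusion, one applies a Desch-type positive perturbation theorem (or, equivalently, a substochastic semigroup argument in the spirit of Kato--Voigt) to obtain that $\mathbb{A}$ generates a positive analytic semigroup $(U(t))_{t\ge 0}$ on $E_0$.

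The mass conservation identity \eqref{M.100} is then derived by testing $\partial_t U(t)f=\mathbb{A}U(t)f$ against the weight $x$ and integrating on $(0,\infty)$: the diffusion contribution vanishes thanks to the Dirichlet trace $U(t)f(0)=0$ and the decay ensured by $U(t)f\in\dom(\mathbb{A})\subset X_m$, while the fragmentation contribution vanishes by Fubini and \eqref{B.0}. The computation is first performed for $f\in\dom(\mathbb{A})$ and then extended by density and continuity.

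For the last assertion, the extension of analyticity from $E_0$ to each $E_\xi=[E_0,E_1]_\xi$ is a consequence of the standard interpolation principle for analytic semigroups: the operators $U(t)$ are bounded on $E_0$ and leave $E_1=\dom(\mathbb{A})$ invariant, with uniform bounds on compact subsets of $(0,\infty)$, so complex interpolation transfers analyticity to every intermediate space $E_\xi$. The main obstacle in this program is the first step, namely obtaining sectorial resolvent estimates for $\mathbb{A}$ in the weighted $L_1$-setting, where the merely locally bounded coefficient $a$ interacts with the Dirichlet Laplacian in a rather delicate way; this is the technical heart of \cite{LaWa2021}, and we content ourselves here with quoting it.
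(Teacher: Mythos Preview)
Your proposal is correct and follows essentially the same route as the paper: the generation of a positive analytic semigroup on $E_0$ together with the mass conservation identity \eqref{M.100} is quoted directly from \cite[Theorem~1.1]{LaWa2021}, and the extension of analyticity to the interpolation spaces $E_\xi$ is obtained from general interpolation results for analytic semigroups (the paper cites \cite[Theorem~6]{Ama1986} or \cite[II.Theorem~2.1.3]{Ama1995} for this step). Your additional exposition of the strategy underlying \cite{LaWa2021} is accurate background but not part of the proof proper.
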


\begin{proof} 
	The first statement in \Cref{P1} follows from \cite[Theorem~1.1]{LaWa2021} and implies the second one, according to \cite[Theorem~6]{Ama1986} or \cite[II.Theorem~2.1.3]{Ama1995}.
\end{proof}

As already mentioned, the functional framework we shall work with involves the interpolation spaces $(E_\xi)_{\xi\in (0,1)}$, for which a simple characterization is not obvious to derive. Nevertheless, we identify a weighted $L_1$-space in which $E_\theta$ embeds continuously.

\begin{lemma}\label{L1}
	The embedding of the interpolation space $E_\theta$ in the weighted $L_1$-space $Y$ defined in \eqref{SY} is continuous.
\end{lemma}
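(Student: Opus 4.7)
The plan is to derive the embedding via the functoriality of complex interpolation. I will construct a pair of weighted $L_1$-spaces $F_0$ and $F_1$ such that $E_i \hookrightarrow F_i$ continuously for $i=0,1$, and such that the standard interpolation formula for weighted $L_1$-spaces yields $[F_0, F_1]_\theta = L_1\bigl((0,\infty), w_0^{1-\theta} w_1^\theta\,\mathrm{d}x\bigr) = Y$. Functoriality of complex interpolation then gives $E_\theta = [E_0, E_1]_\theta \hookrightarrow [F_0, F_1]_\theta = Y$.

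Concretely, I take $F_0 := E_0$ (so that $w_0(x) \simeq x + x^m$) and $F_1 := L_1(w_1\,\mathrm{d}x)$, where
\[
w_1(x) := x^{1 - 2\theta_0/\theta}\quad\text{on } (0,1), \qquad w_1(x) := (1 + a(x))\, x^m\quad\text{on } (1,\infty).
\]
A pointwise check confirms $w_0^{1-\theta} w_1^\theta \simeq \ell$: on $(0,1)$, $x^{1-\theta} x^{\theta - 2\theta_0} = x^{1 - 2\theta_0}$, and on $(1,\infty)$, $x^{m(1-\theta)} (1+a)^\theta x^{m\theta} = (1+a)^\theta x^m$. The embedding $E_1 \hookrightarrow F_1$ on $(1,\infty)$ is immediate from the bound $\int_1^\infty (1+a(x)) x^m |f(x)|\,\mathrm{d}x \le \|f\|_{E_0} + \|af\|_{E_0} \le \|f\|_{E_1}$.

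The main obstacle is the embedding $E_1 \hookrightarrow F_1$ on $(0,1)$, i.e., the estimate $\int_0^1 x^{1-2\theta_0/\theta} |f(x)|\,\mathrm{d}x \lesssim \|f\|_{E_1}$, which is delicate when $\theta_0 > \theta/2$ and the weight becomes singular. I plan to derive this from the uniform bound $\|f\|_{L_\infty(0,1)} \le C \|f\|_{E_1}$ for $f \in E_1$. Starting from $f(0) = 0$, one writes $f(x) = \int_0^x \partial_y f(y)\,\mathrm{d}y$ and uses the representation $\partial_y f(y) = \partial_y f(1/2) + \int_{1/2}^y \partial_z^2 f(z)\,\mathrm{d}z$ together with Fubini to obtain $\|\partial_y f\|_{L_1(0,1)} \le C\bigl(|\partial_y f(1/2)| + \|\partial_x^2 f\|_{E_0}\bigr)$. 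The value $|\partial_y f(1/2)|$ is controlled via the unweighted Sobolev embedding $W^{2,1}((1/4, 3/4)) \hookrightarrow C^1([1/4, 3/4])$, the weight $x$ being bounded away from $0$ and $\infty$ on that interval. The required integrability then follows because $\theta_0 < \theta$ forces $1 - 2\theta_0/\theta > -1$, making $\int_0^1 x^{1-2\theta_0/\theta}\,\mathrm{d}x$ finite, and then $\int_0^1 x^{1-2\theta_0/\theta}|f|\,\mathrm{d}x \le \|f\|_{L_\infty(0,1)} \int_0^1 x^{1-2\theta_0/\theta}\,\mathrm{d}x \lesssim \|f\|_{E_1}$.
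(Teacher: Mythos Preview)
Your argument is correct and follows essentially the same route as the paper: both use functoriality of complex interpolation together with the standard formula $[L_1(w_0\,\mathrm{d}x),L_1(w_1\,\mathrm{d}x)]_\theta \doteq L_1(w_0^{1-\theta}w_1^{\theta}\,\mathrm{d}x)$ for weighted $L_1$-spaces. The paper's version is a bit more economical in that it treats the two pieces of $Y$ by two separate interpolations and invokes the embedding $E_1\hookrightarrow X_r$ for $r\in(-1,1)$ from \cite[Lemma~2.1]{LaWa2021} directly, whereas you combine everything into a single interpolation pair and re-derive the needed small-$x$ control via the $L_\infty(0,1)$ bound.
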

 
\begin{proof} 
Since 
\begin{equation*}
 	E_1\hookrightarrow L_1\big( (0,\infty),(1+a(x))(x+x^m)\mathrm{d}x \big) 
\end{equation*}
and
\begin{align*}
	& \big[L_1\big( (0,\infty),(x+x^m)\mathrm{d}x \big), L_1\big( (0,\infty),(1+a(x))(x+x^m)\mathrm{d}x \big)\big]_{\theta} \\
	& \hspace{4cm} \doteq L_1\big( (0,\infty),(1+a(x))^\theta (x+x^m)\mathrm{d}x \big)\,,
\end{align*}
it readily follows that
\begin{equation*}
 	E_\theta = \big[ E_0,E_1 \big]_\theta \hookrightarrow L_1\big( (0,\infty),(1+a(x))^\theta (x+x^m)\mathrm{d}x \big)\,. 
\end{equation*}
Next, recall from \cite[Lemma~2.1]{LaWa2021} that $E_1\hookrightarrow X_r$ for $r\in (-1,1)$. Therefore,
\begin{equation*}
 	E_\theta\hookrightarrow \big[X_1, X_r\big]_{\theta} = X_{1+\theta(r-1)}\,,\qquad r\in (-1,1)\,,
\end{equation*}
and the choice $r=1-(2\theta_0/\theta)\in (-1,1)$ completes the proof.
\end{proof}

We next derive some positivity properties for the semigroups generated by specific perturbations of $\mathbb{A}$ on $E_0$, which are required later on to establish the non-negativity of solutions to \eqref{FD.0}.

\begin{lemma}\label{L4}
Let $\gamma>0$ and 
\begin{equation*}
	V(x):=\gamma\frac{\ell(x)}{x+x^m}\,,\quad x>0\,,
\end{equation*}
where $\ell$ is defined in \eqref{L}. Then $\mathbb{A}_V:=\mathbb{A}-V$ with $\mathrm{dom}(\mathbb{A}_V) = \mathrm{dom}(\mathbb{A})$ generates a positive analytic semigroup on $E_0$.
\end{lemma}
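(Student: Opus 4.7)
I would treat $\mathbb{A}_V$ as a lower-order perturbation of $\mathbb{A}$. Since $V$ is multiplication by the non-negative function $x\mapsto \gamma \ell(x)/(x+x^m)$, the definitions of $\|\cdot\|_{E_0}$ and $\|\cdot\|_{Y}$ immediately give the identity
\begin{equation*}
\|Vf\|_{E_0}=\int_0^\infty \gamma\,\frac{\ell(x)}{x+x^m}\,|f(x)|\,(x+x^m)\,\mathrm{d}x=\gamma\|f\|_{Y}\,,
\end{equation*}
so that the embedding $E_\theta\hookrightarrow Y$ supplied by \Cref{L1} yields $\|Vf\|_{E_0}\le c\,\gamma\,\|f\|_{E_\theta}$ for some $c>0$. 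When $\theta<1$, the complex-interpolation inequality $\|f\|_{E_\theta}\le c'\|f\|_{E_0}^{1-\theta}\|f\|_{E_1}^\theta$ combined with Young's inequality then delivers, for every $\varepsilon>0$,
\begin{equation*}
\|Vf\|_{E_0}\le \varepsilon\,\|f\|_{E_1}+C(\varepsilon)\,\|f\|_{E_0}\,,\qquad f\in E_1\,,
\end{equation*}
so $V$ is $\mathbb{A}$-bounded with relative bound zero.

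At this point a standard perturbation theorem for generators of analytic semigroups, together with the analyticity of $(U(t))_{t\ge 0}$ on $E_0$ recorded in \Cref{P1}, implies that $\mathbb{A}_V=\mathbb{A}-V$ with $\mathrm{dom}(\mathbb{A}_V)=\mathrm{dom}(\mathbb{A})$ is sectorial and generates an analytic semigroup $(S(t))_{t\ge 0}$ on $E_0$.

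It remains to prove positivity, which I would argue by truncation. Set $V_n(x):=\min\{V(x),n\}$, so that $V_n\in L_\infty((0,\infty))$ is non-negative and $V_n\uparrow V$ pointwise as $n\to\infty$. For each fixed $n$, $-V_n$ is a bounded perturbation of $\mathbb{A}$, and the Trotter product formula applies:
\begin{equation*}
e^{t(\mathbb{A}-V_n)}f=\lim_{k\to\infty}\bigl(U(t/k)\,e^{-tV_n(\cdot)/k}\bigr)^{k}f\,,\qquad f\in E_0\,.
\end{equation*}
Both factors preserve positivity --- $U(t/k)$ by \Cref{P1}, and multiplication by $e^{-tV_n(x)/k}\in[0,1]$ trivially --- so $S_n(t):=e^{t(\mathbb{A}-V_n)}\ge 0$. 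Since $V_n\le V$ ensures that the relative-bound-zero estimate on $V$ transfers uniformly in $n$, the operators $(\mathbb{A}-V_n)$ are sectorial in a common sector, and the monotone convergence $V_n\uparrow V$ provides strong resolvent convergence of $\mathbb{A}-V_n$ to $\mathbb{A}_V$. Invoking the Trotter--Kato approximation theorem then yields $S_n(t)f\to S(t)f$ strongly on $E_0$, and non-negativity is preserved in the limit.

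The hard part is precisely this positivity step: ensuring the truncation scheme transfers positivity from $(S_n(t))$ to $(S(t))$. The Trotter formula is classical for each fixed $n$, but the limit $n\to\infty$ must tame the unboundedness of $V$ both at the origin (the $x^{-2\theta_0}$ singularity) and at infinity (through the $(1+a(x))^\theta$ factor), which is exactly what the uniform-in-$n$ relative-bound-zero bound provides.
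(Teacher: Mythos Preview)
Your treatment of the analyticity part is the same as the paper's: both observe that $V\in\mathcal{L}(E_\theta,E_0)$ via \Cref{L1} (you make the identity $\|Vf\|_{E_0}=\gamma\|f\|_Y$ explicit, which is nice), deduce that $V$ has $\mathbb{A}$-bound zero, and invoke a standard perturbation result.

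For positivity, however, the paper takes a different and more direct route. Rather than approximating $V$ by bounded potentials and passing to the limit, the authors verify resolvent positivity of $\mathbb{A}_V$ by hand: given $g\le 0$ and $u=(\lambda-\mathbb{A}+V)^{-1}g$, they test the equation against $x\,\mathrm{sign}_+(u(x))$, use Kato's inequality for the Laplacian part, the mass-conservation identity \eqref{B.0} for the fragmentation part, and the non-negativity of $V$ to conclude $\int_0^\infty x(\lambda+V)u_+\,\mathrm{d}x\le 0$, hence $u_+=0$. This approach opens up the structure of $\mathbb{A}$ but avoids any limiting procedure.

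Your Trotter/Trotter--Kato argument is also correct and has the virtue of being more modular: it uses the positivity of $(U(t))_{t\ge 0}$ as a black box from \Cref{P1} and never revisits the internal structure of diffusion or fragmentation. The cost is that you must justify strong resolvent convergence $(\lambda-\mathbb{A}+V_n)^{-1}\to(\lambda-\mathbb{A}+V)^{-1}$ and uniform sectoriality; both follow, as you indicate, from the uniform-in-$n$ relative-bound-zero estimate (since $0\le V_n\le V$) together with dominated convergence applied to $(V-V_n)u$ for $u\in\mathrm{dom}(\mathbb{A})$. So your argument is sound, just less self-contained than the paper's one-page computation.
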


\begin{proof} Note that $V:=(f\mapsto Vf)\in \mathcal{L}(E_\theta,E_0)$ due to \Cref{L1}. Since $V$ is thus $\mathbb{A}$-bounded with a zero $\mathbb{A}$-bound and $\mathbb{A}$ generates an analytic semigroup on $E_0$, it follows from \cite[Theorem~III.2.10]{EnNa2000} that $\mathbb{A}_V$ with $\mathrm{dom}(\mathbb{A}_V)=\mathrm{dom}(\mathbb{A})$ generates an analytic semigroup on $E_0$. 
	
It remains to prove that this semigroup is positive. To this end, pick $\lambda>0$ sufficiently large and $g\in E_0$ with $g\le 0$. Then there is $u\in \mathrm{dom}(\mathbb{A})$ such that
\begin{equation*}
	(\lambda-\mathbb{A}+V)u=g\,.
\end{equation*}
Multiplying this identity by $x\,\mathrm{sign}_+(u(x))$ and integrating over $(0,\infty)$ give 
\begin{align*}
	\int_0^\infty x\, \big(\lambda+V(x)\big) u_+(x)\, \rd x &-\int_0^\infty x\,\mathrm{sign}_+(u(x))\,\partial_x^2 u(x)\, \rd x\\
	&=\int_0^\infty x\,\mathrm{sign}_+(u(x))\, \mathcal{F}(u)(x)\, \rd x + \int_0^\infty x\, g (x) \,\mathrm{sign}_+(u(x))\, \rd x\,.
\end{align*}
On the one hand, using \eqref{B.0} and Fubini's theorem,
\begin{align*}
	\int_0^\infty x\,\mathrm{sign}_+(u(x)) \mathcal{F}(u)(x)\,\rd x &=\int_0^\infty x\int_x^\infty a(y)\, b(x,y)\, \mathrm{sign}_+(u(x))\, u(y)\,\rd y\rd x \\
	& \qquad -\int_0^\infty x\, a(x)\, u_+(x) \, \rd x\\
	&\le \int_0^\infty a(y)\,  u_+(y) \int_0^y x\, b(x,y)\,\rd x\rd y - \int_0^\infty x\, a(x)\,  u_+(x)\, \rd x\le 0\,.
\end{align*}
On the other hand,
\begin{align*}
	-\int_0^\infty x\,\mathrm{sign}_+(u(x))\,\partial_x^2 u(x)\,\rd x \ge \int_0^\infty \partial_x u_+(x)\,\rd x =0
\end{align*}
by Kato's inequality \cite[Lemma~A]{Kat1972} and the boundary condition $u(0)=0$, the functions $u$ and $\partial_x u$ vanishing sufficiently rapidly at infinity by \cite[Lemma~2.1]{LaWa2021}. Collecting these inequalities and using the non-positivity of $g$, we end up with
\begin{equation*}
	\int_0^\infty x\,\big(\lambda+V(x)\big)\, u_+(x)\,  \rd x \le 0\,.
\end{equation*}
Consequently, $u\le 0$. We have thus shown that the operator $\mathbb{A}_V=\mathbb{A}-V$ is resolvent positive; that is, the semigroup generated by $\mathbb{A}_V$ is positive.
\end{proof}

\subsection{Coagulation}\label{sec2.2}

We next focus on the coagulation term on the right-hand side of \eqref{FD.1} and study the Lipschitz continuity of the coagulation operator $\mathcal{K}$, noting that it is bilinear. 

\begin{lemma}\label{L2}
The operator $\mathcal{K}$ belongs to $\mathcal{L}^2(Y, E_0)$, the space $Y$ being defined in \eqref{SY}, and
\begin{equation}
	M_1(\mathcal{K}(\phi)) = 0\,, \qquad \phi\in Y\,. \label{CM}
\end{equation}
Furthermore, if $k$ satisfies additionally \eqref{K1b}, then 
\begin{equation}\label{lp}
	\|\mathcal{K}(\psi,\phi)\|_{E_0} \le 
	\frac{3K_*}{2} \big( \|\psi\|_{Y}\, \|\phi\|_{E_0} + \|\psi\|_{E_0}\, \|\phi\|_{Y}\big)\,, \qquad (\psi,\phi)\in Y^2\,.
\end{equation}
\end{lemma}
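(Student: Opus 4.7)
The plan is to work directly from the defining formula for $\mathcal{K}(\psi,\phi)$, splitting it into gain and loss contributions and applying Fubini's theorem together with the pointwise bound on $k$. The central elementary inequality underlying every estimate is that, since $m\ge 1$ gives $x^m+y^m\le (x+y)^m$, one has
\begin{equation*}
	\frac{x+x^m}{x+y+(x+y)^m} \le 1\,,\qquad (x,y)\in (0,\infty)^2\,,
\end{equation*}
together with its twin identity $\frac{(y+z)+(y+z)^m}{y+z+(y+z)^m}=1$, which makes the denominator in \eqref{K1} cancel cleanly in the convolution term.

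For the first assertion I would write $\|\mathcal{K}(\psi,\phi)\|_{E_0} = \int_0^\infty(x+x^m)|\mathcal{K}(\psi,\phi)(x)|\,\mathrm{d}x \le J_1+J_2$, where $J_1$ comes from the convolution and $J_2$ from the loss term. Fubini with the change of variables $z=x-y$ recasts $J_1$ as $\frac{1}{2}\int_0^\infty\!\int_0^\infty[(y+z)+(y+z)^m]\,k(y,z)\,|\psi(y)\phi(z)|\,\mathrm{d}y\,\mathrm{d}z$, so \eqref{K1} together with the identity above gives $J_1\le \frac{k_*}{2}\|\psi\|_Y\|\phi\|_Y$. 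Plugging \eqref{K1} into $J_2 = \int_0^\infty (x+x^m)|\psi(x)|\int_0^\infty k(x,y)|\phi(y)|\,\mathrm{d}y\,\mathrm{d}x$ and applying the first pointwise inequality yields $J_2\le k_*\|\psi\|_Y\|\phi\|_Y$, which establishes $\mathcal{K}\in\mathcal{L}^2(Y,E_0)$. The moment identity \eqref{CM} is then a routine Fubini argument: the gain term becomes $\frac{1}{2}\int_0^\infty\!\int_0^\infty(y+z)\,k(y,z)\,\phi(y)\phi(z)\,\mathrm{d}y\,\mathrm{d}z$, while the symmetry $k(x,y)=k(y,x)$ converts the loss integral $\int_0^\infty\!\int_0^\infty xk(x,y)\phi(x)\phi(y)\,\mathrm{d}y\,\mathrm{d}x$ into exactly the same symmetrised expression, so the two cancel.

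For the refined bound \eqref{lp} under \eqref{K1b}, I would rerun the $J_1$ and $J_2$ estimates with the two-term numerator $\ell(x)(y+y^m)+\ell(y)(x+x^m)$ in place of $\ell(x)\ell(y)$. In $J_1$ the denominator cancels exactly as before, so splitting the numerator yields directly $J_1\le \frac{K_*}{2}\big(\|\psi\|_Y\|\phi\|_{E_0}+\|\psi\|_{E_0}\|\phi\|_Y\big)$. In $J_2$, the contribution of $\ell(x)(y+y^m)$, after absorbing $(x+x^m)/(x+y+(x+y)^m)\le 1$, is bounded by $K_*\|\psi\|_Y\|\phi\|_{E_0}$; the contribution of $\ell(y)(x+x^m)$ requires controlling $\frac{(x+x^m)^2}{x+y+(x+y)^m}\le x+x^m$, which is again the same elementary inequality, and yields $K_*\|\psi\|_{E_0}\|\phi\|_Y$. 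Summing produces exactly $\frac{3K_*}{2}\big(\|\psi\|_Y\|\phi\|_{E_0}+\|\psi\|_{E_0}\|\phi\|_Y\big)$. The one subtle point --- and the reason the constant comes out as sharp as claimed --- is that one must keep $x+x^m$ as a single weight rather than splitting the $E_0$-norm into its $X_1$- and $X_m$-components, so that the cancellation between the weight and the denominator can be exploited in a single stroke.
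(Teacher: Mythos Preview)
Your proof is correct and follows essentially the same approach as the paper: Fubini on the gain term, the pointwise bound \eqref{K1} (resp.\ \eqref{K1b}), and the cancellation between the weight $x+x^m$ and the denominator $x+y+(x+y)^m$. The only organizational difference is that the paper first combines the gain and loss contributions into the single symmetric integral
\[
\frac{3}{2}\int_0^\infty\!\int_0^\infty k(x,y)\,\big[x+y+(x+y)^m\big]\,|\psi(x)|\,|\phi(y)|\,\mathrm{d}y\,\mathrm{d}x
\]
(by bounding $x+x^m\le x+y+(x+y)^m$ in the loss term) and then applies \eqref{K1} or \eqref{K1b} once, whereas you keep $J_1$ and $J_2$ separate throughout; this spares you nothing in the end (same constants), but the paper's route avoids having to treat the $(x+x^m)^2/\big(x+y+(x+y)^m\big)$ factor in $J_2$ as a separate case.
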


\begin{proof}
Given $(\psi,\phi)\in Y^2$,  Fubini's theorem and \eqref{K1} imply that
\begin{align}
	\|\mathcal{K}(\psi,\phi)\|_{E_0} &\le \frac{1}{2} \int_0^\infty (x+x^m) \int_0^x k(y,x-y)\, |\psi(y)|\,|\phi(x-y)|\,\mathrm{d}y\mathrm{d}x \nonumber\\
	&\qquad +\int_0^\infty (x+x^m)\,|\psi(x)| \int_0^\infty k(x,y) |\phi(y)|\, \mathrm{d}y\mathrm{d}x \nonumber\\
	&\le \frac{3}{2}\int_0^\infty\int_0^\infty k(x,y)\,\big[ x+y+(x+y)^m \big]\, \vert\psi(x)\vert\,\vert\phi(y)\vert\,\mathrm{d}y\mathrm{d}x \label{lll} \\
	&\le \frac{3k_*}{2} \left( \int_0^\infty \ell(x) |\psi(x)|\, \mathrm{d}x \right) \left( \int_0^\infty \ell(y) |\phi(y)|\, \mathrm{d}y \right) \nonumber\\
	&\le \frac{3k_*}{2}\|\psi\|_{Y}\,\|\phi\|_{Y}\,, \nonumber
	\end{align}
which proves the claim $\mathcal{K}\in\mathcal{L}^2(Y, E_0)$. A classical computation based on Fubini's theorem then gives~\eqref{CM}.

\medskip

Assume next that $k$ satisfies additionally \eqref{K1b}. Then, for $(\psi,\phi)\in Y^2$, the estimate \eqref{lll} entails
\begin{align*}
	\|\mathcal{K}(\psi,\phi)\|_{E_0} &\le \frac{3}{2} \int_0^\infty\int_0^\infty k(x,y)\, \big[ x+y+(x+y)^m \big]\, |\psi(x)|\, |\phi(y)|\, \mathrm{d}y \mathrm{d}x  \\
	&\le \frac{3K_*}{2} \int_0^\infty\int_0^\infty \big[ \ell(x) (y+y^m) + \ell(y) (x+x^m) \big]\, |\psi(x)|\, |\phi(y)|\, \mathrm{d}y \mathrm{d}x  \\
	&\le \frac{3K_*}{2} \big( \|\psi\|_{Y}\, \|\phi\|_{E_0} + \|\psi\|_{E_0}\, \|\phi\|_{Y} \big)\,,
	\end{align*}
which completes the proof of  \Cref{L2}.	
\end{proof}

\subsection{Proof of \Cref{th1}}\label{sec2.3}

Having established the above preliminary results, we are now in a position to begin the proof of  \Cref{th1}. Since we aim at handling initial values with mild regularity, we introduce the following time-weighted spaces.
Given $T>0$, a Banach space $E$, and $\mu\in \mathbb{R}$, we denote the Banach space of all functions
$u\in C((0,T],E)$ such that $\big(t\mapsto t^{\mu}u(t)\big)$ is bounded on $(0,T]$ by $BC_{\mu}((0,T],E)$, equipped with the norm
\begin{equation*}
u\mapsto \|u\|_{BC_{\mu}((0,T],E)} := \sup_{t\in (0,T]}\left\{ t^{\mu}\, \|u(t)\|_E \right\}\,.
\end{equation*}
We write $C_{\mu}((0,T],E)$ for the closed linear subspace thereof consisting of all $u\in BC_{\mu}((0,T],E)$ satisfying additionally $t^{\mu} u(t)\rightarrow 0$ in $E$ as $t\rightarrow 0$. Note that $BC_{\nu}((0,T],E)\hookrightarrow BC_{\mu}((0,T],E)$ for $\nu\le \mu$ with
\begin{equation}
	\|u\|_{BC_{\mu}((0,T],E)} \le T^{\mu-\nu} \|u\|_{BC_{\nu}((0,T],E)}\,, \qquad u\in BC_{\nu}((0,T],E)\,. \label{X1}
\end{equation}
We denote the open and closed balls of $E$ centered at $f\in E$ and of radius $R$ by $\mathbb{B}_E(f,R)$ and $\bar{\mathbb{B}}_E(f,R)$, respectively.

To start with, we analyze how the semigroup $(U(t))_{t\ge 0}$ generated by $\mathbb{A}$ on $E_0$, see \Cref{P1}, acts on $C_{\mu}((0,T],E_\theta)$. The subsequent \Cref{L3} and \Cref{L6} are more or less implicitly contained in the proof of \cite[Theorem~4.1]{Yag2010} with the difference that domains of fractional powers are used instead of interpolation spaces. 

\begin{lemma}\label{L3}
Let $0\le \xi<\eta\le 1$ with $(\xi,\eta)\not=(0,1)$ and $0<T\le T_0$. There is $c_0(T_0,\xi,\eta)>0$ such that, if $f\in E_\xi$, then
\begin{equation*}
	Uf:=\big( t\mapsto U(t)f \big)\in C_{\eta-\xi}((0,T],E_\eta)\quad \text{with }\quad \|Uf\|_{C_{\eta-\xi}((0,T],E_\eta)}\le c_0(T_0,\xi,\eta)\|f\|_{E_\xi} \,.
\end{equation*}
\end{lemma}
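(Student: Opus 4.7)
Here is my plan. The central ingredient is the analyticity of $(U(t))_{t\ge 0}$ on $E_0$ from Proposition~\ref{P1}, which provides the standard estimates $\|U(t)\|_{\mathcal{L}(E_0)}\le M$ and $\|\mathbb{A} U(t)\|_{\mathcal{L}(E_0)}\le M/t$ on $(0,T_0]$. Combined with the commutation $\mathbb{A} U(t)=U(t)\mathbb{A}$ on $\dom(\mathbb{A})$, these immediately give
\begin{equation*}
\|U(t)\|_{\mathcal{L}(E_0,E_1)}\le C(T_0)/t\,,\qquad \|U(t)\|_{\mathcal{L}(E_1)}\le M\,,\qquad t\in (0,T_0]\,.
\end{equation*}
The goal is to lift these bounds to the sharp smoothing estimate $\|U(t)\|_{\mathcal{L}(E_\xi,E_\eta)}\le c_0\, t^{-(\eta-\xi)}$ on $(0,T_0]$, from which both claims of the lemma will follow.

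I would reach the sharp exponent through a two-stage complex interpolation. First, interpolating $\|U(t)\|_{\mathcal{L}(E_0,E_1)}\le C/t$ with $\|U(t)\|_{\mathcal{L}(E_1)}\le M$ (common target $E_1$) along $E_\xi=[E_0,E_1]_\xi$ yields $\|U(t)\|_{\mathcal{L}(E_\xi,E_1)}\le C\, t^{-(1-\xi)}$, and interpolating $\|U(t)\|_{\mathcal{L}(E_0)}\le M$ with $\|U(t)\|_{\mathcal{L}(E_1)}\le M$ gives $\|U(t)\|_{\mathcal{L}(E_\xi)}\le M$. Second, the reiteration identity $[E_\xi,E_1]_\tau=E_{(1-\tau)\xi+\tau}$ of complex interpolation, applied with $\tau=(\eta-\xi)/(1-\xi)$ so that the intermediate target becomes $E_\eta$, produces
\begin{equation*}
\|U(t)\|_{\mathcal{L}(E_\xi,E_\eta)}\le c_0(T_0,\xi,\eta)\, t^{-(\eta-\xi)}\,,\qquad t\in(0,T_0]\,.
\end{equation*}
The exclusion $(\xi,\eta)\ne(0,1)$ ensures that this reiteration step is non-degenerate; the endpoint case would anyway reduce to the first interpolation step. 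Continuity of $t\mapsto U(t)f$ from $(0,T]$ into $E_\eta$ is automatic from the analyticity of $(U(t))$ and the embedding $E_1\hookrightarrow E_\eta$.

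It remains to show $t^{\eta-\xi}\|U(t)f\|_{E_\eta}\to 0$ as $t\to 0^+$. Here I would use a density argument: since $E_0\cap E_1=E_1$ is dense in $E_\xi=[E_0,E_1]_\xi$ (a general property of the complex method) and $E_1\hookrightarrow E_\eta$, for each $\varepsilon>0$ I can choose $g\in E_\eta$ with $\|f-g\|_{E_\xi}\le\varepsilon$. Splitting $U(t)f=U(t)(f-g)+U(t)g$ and using the smoothing bound on the first term and the uniform $E_\eta$-boundedness of $U$ on the second gives
\begin{equation*}
t^{\eta-\xi}\|U(t)f\|_{E_\eta}\le c_0\,\|f-g\|_{E_\xi}+M\,t^{\eta-\xi}\|g\|_{E_\eta}\le c_0\,\varepsilon+M\,t^{\eta-\xi}\|g\|_{E_\eta}\,,
\end{equation*}
so $\limsup_{t\to 0^+} t^{\eta-\xi}\|U(t)f\|_{E_\eta}\le c_0\,\varepsilon$, and arbitrariness of $\varepsilon$ forces the limit to vanish. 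The main technical point is obtaining the sharp exponent $t^{-(\eta-\xi)}$, which a one-shot interpolation would replace by the weaker $t^{-\eta(1-\xi)}$; this is precisely what motivates the two-step argument relying on the reiteration identity.
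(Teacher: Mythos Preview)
Your proof is correct and follows essentially the same approach as the paper: establish the smoothing estimate $\|U(t)\|_{\mathcal{L}(E_\xi,E_\eta)}\le c_0\,t^{-(\eta-\xi)}$ on $(0,T_0]$, then use density of $E_\eta$ in $E_\xi$ to upgrade from $BC_{\eta-\xi}$ to $C_{\eta-\xi}$. The only difference is that the paper obtains the smoothing estimate by directly citing \cite[II.Lemma~5.1.3]{Ama1995} (recorded as \eqref{S}), whereas you rederive it by hand via the two-step complex interpolation with reiteration; the subsequent density argument is identical in both.
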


\begin{proof}
Recall that there is $c_0(T_0,\xi,\eta)>0$ such that
\begin{equation}\label{S}
	t^{\eta-\xi}\|U(t)\|_{\mathcal{L}(E_\xi,E_\eta)} + \|U(t)\|_{\mathcal{L}(E_\eta)} + t\|\mathbb{A}U(t)\|_{\mathcal{L}(E_\eta)}\le c_0(T_0,\xi,\eta)\,,\qquad t\in (0,T_0]\,,
\end{equation}
see \cite[II.Lemma~5.1.3]{Ama1995}. Thus, since $(U(t))_{t\ge 0}$ is a strongly continuous semigroup on $E_\eta$, see \Cref{P1}, it readily follows from \eqref{S} that 
\begin{equation*}
	\big( t\mapsto U(t)f\big)\in BC_{\eta-\xi}((0,T],E_\eta) \quad \text{with }\quad \|Uf\|_{BC_{\eta-\xi}((0,T],E_\eta)}\le c_0(T_0,\xi,\eta)\|f\|_{E_\xi} 
\end{equation*}
for all $f\in E_\xi$ and $T\in (0,T_0]$. 

We next recall that $E_\eta$ is dense in $E_\xi$  since $[\cdot,\cdot]_\xi$ is an admissible interpolation functor. Hence, given $\varepsilon >0$, there is $g\in E_\eta$ such that 
\begin{equation*}
	\|f-g\|_{E_\xi}\le \frac{\varepsilon}{c_0(T_0,\xi,\eta)}\,.
\end{equation*}
Therefore, by \eqref{S},
\begin{align*}
	t^{\eta-\xi}\|U(t)f\|_{E_\eta} & \le t^{\eta-\xi} \|U(t)(f-g)\|_{E_\eta} + t^{\eta-\xi} \|U(t) g\|_{E_\eta} \\
	& \le t^{\eta-\xi} \|U(t)\|_{\mathcal{L}(E_\xi,E_\eta)} \|f-g\|_{E_\xi} + t^{\eta-\xi}\|U(t)\|_{\mathcal{L}(E_\eta)}\| g\|_{E_\eta} \\
	& \le \varepsilon + c_0(T_0,\xi,\eta) \|g\|_{E_\eta} t^{\eta-\xi}\,,
\end{align*}
so that, since $\eta>\xi$,
\begin{equation*}
	\limsup_{t\to 0} t^{\eta-\xi}\|U(t)f\|_{E_\eta} \le \varepsilon\,.
\end{equation*}
We then let $\varepsilon\to 0$ to complete the proof.
\end{proof}

The next step is to elucidate the behavior of $(U(t))_{t\ge 0}$ when involved in a convolution with respect to time. To this end, given $T>0$  and $u:(0,T]\rightarrow E_0$, we set
\begin{equation*}
	U\star u (t):=\int_0^t U(t-s) u(s)\,\rd s\,,\qquad t\in (0,T]\,,
\end{equation*}
whenever this integral makes sense.

\begin{lemma}\label{L6}
Consider $(2\eta,\nu)\in [0,1)^2$ and $0<T\le T_0$. Then
\begin{equation}\label{l2a}
	[(u,v)\mapsto U\star \mathcal{K}(u,v)]\in \mathcal{L}^2\big(C_{\eta}((0,T],E_\theta),C_{2\eta+\nu-1}((0,T],E_\nu)\big)
\end{equation}
and
\begin{equation}\label{l2b}
	[(u,v)\mapsto U\star \mathcal{K}(u,v)]\in  \mathcal{L}^2\big(BC((0,T],E_\theta),  BC_{\nu-1}((0,T],E_\nu)\big)\,,
\end{equation}
where $\theta$ is defined in \eqref{K1L} and the norms of the above bilinear form depend only on $T_0$, $\eta$, and $\nu$.
\end{lemma}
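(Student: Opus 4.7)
The plan is to exploit the bilinear continuity $\mathcal{K}\in\mathcal{L}^2(E_\theta,E_0)$, which follows at once from \Cref{L2} combined with the embedding $E_\theta\hookrightarrow Y$ provided by \Cref{L1}, together with the analytic smoothing bound $\|U(\tau)\|_{\mathcal{L}(E_0,E_\nu)}\le c\,\tau^{-\nu}$ supplied by \eqref{S} (with $\xi=0$). These two ingredients reduce everything to an elementary Beta-function computation, so no new structural estimate is required.

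More concretely, for $u,v\in C_\eta((0,T],E_\theta)$ I would bound the integrand pointwise by
\begin{equation*}
\|U(t-s)\mathcal{K}(u(s),v(s))\|_{E_\nu}\le c\,C_\mathcal{K}\|u\|_{C_\eta}\|v\|_{C_\eta}(t-s)^{-\nu}s^{-2\eta},
\end{equation*}
where $C_\mathcal{K}$ denotes the bilinear norm of $\mathcal{K}$ from $E_\theta$ to $E_0$, and then integrate via the identity $\int_0^t(t-s)^{-\nu}s^{-2\eta}\,\mathrm{d}s=B(1-\nu,1-2\eta)\,t^{1-\nu-2\eta}$, which converges precisely because both $\nu$ and $2\eta$ lie in $[0,1)$. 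Multiplying by $t^{2\eta+\nu-1}$ already yields the desired bilinear bound in $BC_{2\eta+\nu-1}((0,T],E_\nu)$ with constant depending only on $T_0,\eta,\nu$. To upgrade from $BC_{2\eta+\nu-1}$ to $C_{2\eta+\nu-1}$, I would invoke the defining property of the space $C_\eta$, namely $s^\eta u(s),s^\eta v(s)\to 0$ in $E_\theta$ as $s\to 0$: given $\varepsilon>0$, pick $\delta>0$ so small that $s^{2\eta}\|u(s)\|_{E_\theta}\|v(s)\|_{E_\theta}<\varepsilon$ for $s\in(0,\delta]$; restricting the previous computation to $t\le\delta$ gives
\begin{equation*}
t^{2\eta+\nu-1}\|(U\star\mathcal{K}(u,v))(t)\|_{E_\nu}\le c\,C_\mathcal{K}\,B(1-\nu,1-2\eta)\,\varepsilon,
\end{equation*}
so the required limit at $t=0$ vanishes. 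Continuity of $t\mapsto (U\star\mathcal{K}(u,v))(t)$ into $E_\nu$ on $(0,T]$ is the standard consequence of strong continuity of $U$ on $E_\nu$ combined with dominated convergence applied to the usual splitting $\int_0^{t_1}[U(t_2-s)-U(t_1-s)]\mathcal{K}(u(s),v(s))\,\mathrm{d}s+\int_{t_1}^{t_2}U(t_2-s)\mathcal{K}(u(s),v(s))\,\mathrm{d}s$.

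The second assertion \eqref{l2b} is simpler: for $u,v\in BC((0,T],E_\theta)$ the pointwise bound $\|\mathcal{K}(u(s),v(s))\|_{E_0}\le C_\mathcal{K}\|u\|_\infty\|v\|_\infty$ together with $\int_0^t(t-s)^{-\nu}\,\mathrm{d}s=t^{1-\nu}/(1-\nu)$ produces $\|(U\star\mathcal{K}(u,v))(t)\|_{E_\nu}\le c\,C_\mathcal{K}(1-\nu)^{-1}\|u\|_\infty\|v\|_\infty\,t^{1-\nu}$, which is exactly the desired bound in $BC_{\nu-1}((0,T],E_\nu)$. The only mildly subtle point of the whole argument is the splitting used to obtain the vanishing property at $t=0$ for the $C_{2\eta+\nu-1}$ membership; all remaining work is a direct application of the smoothing estimate \eqref{S} and the Beta integral.
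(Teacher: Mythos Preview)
Your proposal is correct and follows essentially the same approach as the paper: both combine the bilinear bound $\mathcal{K}\in\mathcal{L}^2(E_\theta,E_0)$ (via \Cref{L1} and \Cref{L2}) with the smoothing estimate \eqref{S} and the Beta-function identity $\int_0^t(t-s)^{-\nu}s^{-2\eta}\,\mathrm{d}s=B(1-\nu,1-2\eta)\,t^{1-\nu-2\eta}$, and obtain the vanishing at $t=0$ from the defining property of $C_\eta$. The only organizational difference is that the paper first proves the linear mapping property $U\star\in\mathcal{L}\big(C_\mu((0,T],E_0),C_{\mu+\nu-1}((0,T],E_\nu)\big)$ and then composes with $\mathcal{K}:C_\eta\times C_\eta\to C_{2\eta}$, whereas you carry out the composition in one pass; the underlying estimates are identical.
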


\begin{proof}
Pick $\mu\in [0,1)$ and $u\in BC_{\mu}((0,T],E_0)$. Let $t\in (0,T]$. By \eqref{S},
\begin{align}
	\|U\star u(t)\|_{E_\nu} &\le \int_0^t \|U(t-s)\|_{\mathcal{L}(E_0,E_\nu)}\, \| u(s)\|_{E_0}\,\rd s  \nonumber\\
	& \le c_0(T_0,0,\nu) \int_0^t (t-s)^{-\nu} s^{-\mu}\,\rd s\, \|u\|_{BC_ {\mu}((0,t],E_0)}\nonumber\\
	& = c_0(T_0,0,\nu) t^{1-\nu-\mu}\, \mathsf{B}(1-\nu,1-\mu)\,\|u\|_{BC_{\mu}((0,t],E_0)}\,,\label{X2}
\end{align}
where $\mathsf{B}$ denotes the Beta function. Therefore, since
\begin{equation*}
	\|u\|_{BC_{\mu}((0,t],E_0)} \le \|u\|_{BC_{\mu}((0,T],E_0)}\,,
\end{equation*}
we infer from \eqref{X2} that 
\begin{equation}
	[u\mapsto U\star u]\in \mathcal{L}\big(BC_{\mu}((0,T],E_0), BC_{\mu+\nu-1}((0,T],E_\nu)\big)\,. \label{X3}
\end{equation}
Assume then that $u\in C_\mu((0,T],E_0)$. This property, along with \eqref{X2}, readily implies that $U\star u$ belongs to $C_{\mu+\nu-1}((0,T],E_\nu)$ and we conclude that
\begin{equation}
	[u\mapsto U\star u]\in \mathcal{L}\big(C_{\mu}((0,T],E_0), C_{\mu+\nu-1}((0,T],E_\nu)\big)\,. \label{X4}
\end{equation}

Now, if $(u,v)\in \big( BC_\rho((0,T],E_\theta) \big)^2$ for some $\rho\in\mathbb{R}$, then \Cref{L1} and \Cref{L2} entail that, for $t\in (0,T]$,
\begin{align*}
	t^{2\rho} \|\mathcal{K}(u(t),v(t))\|_{E_0} & \le t^{2\rho} \|\mathcal{K}\|_{\mathcal{L}^2(Y,E_0)} \|u(t)\|_Y \|v(t)\|_Y \\
	& \le c_1^2 \|\mathcal{K}\|_{\mathcal{L}^2(Y,E_0)} \|u\|_{BC_\rho((0,t],E_\theta)} \|v\|_{BC_\rho((0,t],E_\theta)}\,,
\end{align*}
where $c_1$ is the norm of the continuous embedding of $E_\theta$ in $Y$, see \Cref{L1}. Consequently,
\begin{equation}
	\mathcal{K}(u,v)\in BC_{2\rho}((0,T],E_0)\,. \label{X5}
\end{equation}
Also, if $u\in C_\rho((0,T],E_\theta)$ or $v\in C_\rho((0,T],E_\theta)$, then
\begin{equation}
	\mathcal{K}(u,v)\in C_{2\rho}((0,T],E_0)\,. \label{X6}
\end{equation}
We then combine \eqref{X4} (with $\mu=2\eta$) and \eqref{X6} (with $\rho=\eta$) to derive~\eqref{l2a}, while~\eqref{l2b} follows from~\eqref{X3} (with $\mu=0$) and~\eqref{X5} (with $\rho=0$).
\end{proof}

We are now ready to provide the proof of  \Cref{th1}.

\begin{proof}[Proof of \Cref{th1}]
{\bf Step~1: Local Existence.} Let $T_0>0$ be arbitrary. We consider $\xi\in [0,1)$ with $2\theta<1+\xi$ and $f\in E_\xi$, but handle the ranges $\xi\in [0,\theta)$ and $\xi\in [\theta,1)$ differently. In the following, $c$ and $(c_i)_{i\ge 2}$ denote positive constants depending only on $\theta_0$, $\theta$, $m$, $T_0$, and $\xi$. Dependence upon additional parameters will be indicated explicitly.

\medskip

Assume first that $\xi\in [0,\theta)$ and let $T\in (0,T_0]$. We note that \eqref{l2a} (with $2\eta=2(\theta-\xi)<1$ and $\nu=\theta$)  implies that,
\begin{equation}\label{l1}
	[(u,v)\mapsto U\star \mathcal{K}(u,v)]\in \mathcal{L}^2\big(C_{\theta-\xi}((0,T],E_\theta), C_{3\theta-2\xi-1}((0,T],E_\theta)\big)\,,
\end{equation}
with a norm depending only on $\theta$, $T_0$, and $\xi$, while the constraint $2\theta<1+\xi$ and \eqref{X1} guarantee
\begin{equation}\label{l1b}
		C_{3\theta-2\xi-1}((0,T],E_\theta)\hookrightarrow C_{\theta-\xi}((0,T],E_\theta)
\end{equation}
with norm bounded by $T^{1+\xi-2\theta}$. Introducing 
\begin{equation*}
	Z_T := C_{\theta-\xi}((0,T],E_\theta)\,,
\end{equation*}	 
it follows from \eqref{l1} and \eqref{l1b} that there is $c_2>0$ such that 
\begin{equation}
	\| U\star \mathcal{K}(\phi)\|_{Z_T}\le c_2\,  T^{1+\xi-2\theta}\, \|\phi\|_{Z_T}^2\,, \qquad \phi\in Z_T\,, \label{X10}
\end{equation}
and
\begin{equation}
	\| U\star \mathcal{K}(\phi)-U\star \mathcal{K}(\psi)\|_{Z_T}\le c_2\,  T^{1+\xi-2\theta}\, \big(\|\phi\|_{Z_T}+\|\psi\|_{Z_T}\big)\, \|\phi-\psi\|_{Z_T}\,, \qquad (\phi, \psi)\in Z_T^2\,. \label{X11}
\end{equation}
Given $R\ge \|f\|_{E_\xi}$, we choose $T_1=T_1(R)\in (0,T_0]$ such that 
\begin{equation*}
	4 \big( 1 + c_0(T_0,\xi,\theta) R \big)^2 c_2 T_1^{1+\xi-2\theta}\le 1\,,
\end{equation*}
the constant $c_0(T_0,\xi,\theta)$ being defined in \Cref{L3}, and infer from \Cref{L3} (with $\eta=\theta$) and the choice of $R$ that
\begin{equation}
	\|Uf\|_{Z_{T_1}}\le c_0(T_0,\xi,\theta)R\,. \label{X12}
\end{equation}	 
We now define 
\begin{equation*}
	F(\phi) := Uf +  U\star \mathcal{K}(\phi)\,, \qquad \phi\in \bar{\mathbb{B}}_{Z_{T_1}}(Uf,1)\,.
\end{equation*}
On the one hand, it follows from \eqref{X10} and \eqref{X12} that $F(\phi)\in Z_{T_1}$ and, by the choice of $T_1$, 
\begin{align*}
	\|F(\phi) - Uf\|_{Z_{T_1}} & \le \|U\star \mathcal{K}(\phi)\|_{Z_{T_1}} \le c_2 T_1^{1+\xi-2\theta} \|\phi - Uf + Uf\|_{Z_{T_1}}^2 \\
	& \le c_2 T_1^{1+\xi-2\theta} \big( 1 + c_0(T_0,\xi,\theta)R \big)^2 \le 1\,,
\end{align*}
so that $F(\phi)\in \bar{\mathbb{B}}_{Z_{T_1}}(Uf,1)$. On the other hand, for $(\phi,\psi)\in \big( \bar{\mathbb{B}}_{Z_{T_1}}(Uf,1) \big)^2$, we deduce from \eqref{X11} that
\begin{align}
	\|F(\phi)-F(\psi)\|_{Z_{T_1}} & = \|U\star \mathcal{K}(\phi)-U\star \mathcal{K}(\psi)\|_{Z_{T_1}}\nonumber\\
	& \le c_2 T_1^{1+\xi-2\theta} \big( \|\phi\|_{Z_{T_1}} + \|\psi\|_{Z_{T_1}} \big) \|\phi-\psi\|_{Z_{T_1}} \nonumber\\
	& \le  2 c_2 T_1^{1+\xi-2\theta} \big( 1 + c_0(T_0,\xi,\theta)R \big) \|\phi-\psi\|_{Z_{T_1}} \nonumber\\
	& \le \frac{1}{2} \|\phi-\psi\|_{Z_{T_1}}\,.\label{wq}
\end{align}
Consequently, $F$ is a strict contraction on $\bar{\mathbb{B}}_{Z_{T_1}}(Uf,1)$, so that Banach's fixed point theorem yields a unique fixed point $\phi=\phi(\cdot;f) \in \bar{\mathbb{B}}_{Z_{T_1}}(Uf,1)$ of $F$; that is, for $t\in (0,T_1]$,
\begin{equation}\label{phi}
	\phi(t) = \left(Uf +  U\star \mathcal{K}(\phi)\right)(t) =  U(t)f +  \int_0^t U(t-s)\,\mathcal{K}(\phi)(s)\,\rd s
\end{equation}
and $\phi$ is thus a mild solution to \eqref{CP} in $E_\theta$ on $(0,T_1]$. From \eqref{X1}, \eqref{l2a} (with $\eta=\theta-\xi$ and $\nu=\xi$),  and the constraint $2\theta < 1+\xi$, we deduce that
\begin{equation*}
	U\star \mathcal{K}(\phi)\in C_{2\theta-\xi-1}((0,T_1],E_\xi)\hookrightarrow C_{0}((0,T_1],E_\xi)\,,
\end{equation*}
which -- together with \eqref{phi} and the continuity property $Uf\in C([0,T_1],E_\xi)$ due to $f\in E_\xi$ and \Cref{P1} -- ensures that $\phi\in C([0,T_1],E_\xi)$. Moreover, for $(f_1,f_2)\in \big( \mathbb{B}_{E_\xi}(0,R) \big)^2$, the integral formulation \eqref{phi}, along with \Cref{L3} (with $\eta=\theta$) and \eqref{wq}, yields
\begin{align*}
	\|\phi(\cdot;f_1)-\phi(\cdot;f_2)\|_{Z_{T_1}} &\le \| U(f_1-f_2)\|_{Z_{T_1}} + \| U\star \mathcal{K}(\phi(\cdot;f_1))-U\star \mathcal{K}(\phi(\cdot;f_2))\|_{Z_{T_1}}\\
	& \le c_0(T_0,\xi,\theta)\|f_1-f_2\|_{E_\xi} +\frac{1}{2} \|\phi(\cdot;f_1) - \phi(\cdot;f_2)\|_{Z_{T_1}}\,,
\end{align*} 
and thus
\begin{equation}\label{p1}
	\|\phi(\cdot;f_1)-\phi(\cdot;f_2)\|_{Z_{T_1}} \le 2  c_0(T_0,\xi,\theta)\|f_1-f_2\|_{E_\xi}\,.
\end{equation}
Using again the embedding $C_{2\theta-\xi-1}((0,T_1],E_\xi)\hookrightarrow C_{0}((0,T_1],E_\xi)$, see \eqref{X1}, it  follows from \Cref{P1}, \eqref{l2a} (with $\eta=\theta-\xi$ and $\nu=\xi$), \eqref{phi}, and \eqref{p1} that
\begin{align*}
	& \|\phi(\cdot;f_1)-\phi(\cdot;f_2)\|_{C([0,T_1],E_\xi)} \\
	& \hspace{1cm} \le \| U(f_1-f_2)\|_{C([0,T_1],E_\xi)} + \| U\star \mathcal{K}(\phi(\cdot;f_1))-U\star \mathcal{K}(\phi(\cdot;f_2))\|_{C_{0}((0,T_1],E_\xi)}\\
	& \hspace{1cm} \le c \|f_1-f_2\|_{E_\xi} + c \big( \|\phi(\cdot;f_1)\|_{Z_{T_1}} + \|\phi(\cdot;f_2\|_{Z_{T_1}} \big) \|\phi(\cdot;f_1) - \phi(\cdot;f_2) \|_{Z_{T_1}} \\
	& \hspace{1cm} \le  c\|f_1-f_2\|_{E_\xi}\,.
\end{align*}
This estimate entails local uniqueness of solutions to \eqref{CP} and we can then extend $\phi(\cdot;f)$ to a unique mild solution on a maximal interval of existence $[0,t^+(f))$ by classical arguments.

Finally,  the conservation of mass stated in \eqref{M.102} is a consequence of \eqref{M.100}, \eqref{CM}, and \eqref{phi}.

\medskip
	
For the case $f\in E_\xi$ with $\xi\in [\theta,1)$, one  proceeds as above but  performs the fixed point argument for $F$ in $C([0,T_1],E_\xi)$ (instead of $Z_{T_1}$) with the help of \eqref{l2b} (instead of \eqref{l2a}).

\bigskip

\noindent{\bf Step~2: Regularity.} Given $\xi\in [0,1)$ with $2\theta<1+\xi$ and $f\in E_\xi$, we now check that $\phi=\phi(\cdot;f)$ is a classical solution to~\eqref{CP} in $E_0$ on $(0,t_+(f))$. To this end, let $\nu\in \big( \max\{\theta,\xi\},1 \big)$ and $T\in (0,t^+(f))$. If $\xi\in [0,\theta)$, then $\phi\in C_{\theta-\xi}((0,T],E_\theta)$. Hence, \Cref{L3} (with $\eta=\nu$) and \eqref{l2a} (with $\eta=\theta-\xi$) ensure that $Uf\in C_{\nu-\xi}((0,T],E_\nu)$ and $U\star \mathcal{K}(\phi) \in C_{2(\theta-\xi)+\nu-1}((0,T],E_\nu)$, respectively. Similarly, if $\xi\in [\theta,1)$, then $\phi\in C([0,T],E_\theta)$ and it follows from \Cref{L3} (with $\eta=\nu$) and from \eqref{l2b} that $Uf\in C_{\nu-\xi}((0,T],E_\nu)$ and $U\star \mathcal{K}(\phi) \in C_{\nu-1}((0,T],E_\nu)$, respectively. In both cases, since $T<t^+(f)$ is arbitrary, we then deduce from \eqref{phi} that 
\begin{equation*}
	\phi\in C\big( (0, t^+(f)),E_\nu \big)\,.
\end{equation*} 
Next, given $\varepsilon\in (0,t^+(f))$, we define $\phi_\varepsilon(\cdot) := \phi(\varepsilon+\cdot)$ and 
\begin{equation*}
	h_\varepsilon:=\mathcal{K}(\phi_\varepsilon)\in C([0,t^+(f)-\varepsilon),E_0)\,,
\end{equation*}
the latter being a consequence of \Cref{L1}, \Cref{L2}, and the regularity of $\phi$. According to \eqref{phi}, $\phi_\varepsilon$ is the unique mild solution in $E_0$ to the linear problem
\begin{equation*}
	\frac{\mathrm{d}\phi_\varepsilon}{\mathrm{d}t}(t)=\mathbb{A}\phi_\varepsilon(t)+h_\varepsilon(t)\,,\quad t\in [0,t^+(f)-\varepsilon)\,,\qquad \phi_\varepsilon(0)=\phi(\varepsilon)\in E_\nu\,,
\end{equation*}
so that $\phi_\varepsilon\in C^{\nu-\theta}([0,t^+(f)-\varepsilon),E_\theta)$ by \cite[II.Theorem~5.3.1]{Ama1995}. This last property, along with \Cref{L1} and \Cref{L2}, entails that
\begin{equation*}
	h_\varepsilon\in C^{\nu-\theta}([0,t^+(f)-\varepsilon),E_0)\,,
\end{equation*}
and \cite[II.Theorem~1.2.1]{Ama1995} implies that the mild solution $\phi_\varepsilon$ is actually a strong solution with
\begin{equation*}
	\phi_\varepsilon\in C^1((0,t^+(f)-\varepsilon),E_0)\cap C((0,t^+(f)-\varepsilon),E_1)\,.
\end{equation*}
As  $\varepsilon\in (0,t^+(f))$ is arbitrary,  we conclude that $\phi$ satisfies
\begin{equation*}
	\phi\in  C^1((0,t^+(f)),E_0)\cap C((0,t^+(f)),E_1)\,,
\end{equation*} 
and is thus a strong solution to \eqref{CP} on $(0,t^+(f))$.

\bigskip

\noindent{\bf Step~3: Continuous Dependence.} Let $f_0\in E_\xi$  and $t_0\in (0,t^+(f_0))$ be arbitrary. We fix $t_*\in (t_0,t^+(f_0))$ and $R>0$ such that $\phi([0,t_*];f_0)\subset \mathbb{B}_{E_\xi}(f_0,R)$. By {\bf Step~1}, there are $T_1=T_1(2R+\|f_0\|_{E_\xi})>0$ and $\kappa_0\ge 1$  such that 
\begin{equation}
	T_1<t^+(f) \;\;\text{ for any }\;\; f\in \bar{\mathbb{B}}_{E_\xi}(f_0,2R) \label{FE1}
\end{equation}
and
\begin{align}\label{FE2}
	\|\phi(t;f_1)-\phi(t;f_2)\|_{E_\xi}\leq \kappa_0 \|f_1-f_2\|_{E_\xi}\,,\quad t\in [ 0,T_1]\,, \quad (f_1, f_2)\in \big( \bar{\mathbb{B}}_{E_\xi}(f_0,2R) \big)^2\,.
\end{align}
Let $N_1\in\mathbb{N}\setminus\{0\}$ be such that $(N_1-1)T_1< t_*\le N_1 T_1$. We claim that there exists $k_0\ge 1$ such that
\begin{itemize}
	\item[($\alpha$)] $t_*<t^+(f_1)$ for each $f_1\in \mathbb{B}_{E_\xi}\big( f_0,R \kappa_0^{1-N_1} \big)$,
	\item[($\beta$)] $\|\phi(t;f_1)-\phi(t;f_0)\|_{E_\xi} \leq k_0 \|f_1-f_0\|_{E_\xi}$ for $0\le t\le t_*$ and $f_1\in \mathbb{B}_{E_\xi}\big(f_0,R \kappa_0^{1-N_1}\big)$.
\end{itemize}
Indeed, let $f_1\in \mathbb{B}_{E_\xi}\big(f_0,R \kappa_0^{1-N_1}\big) \subset \mathbb{B}_{E_\xi}\big(f_0,R\big)$. Either $t_*\le T_1$  (i.e., $N_1=1$) and the properties~($\alpha$) and~($\beta$) follow from \eqref{FE1} and \eqref{FE2}. Or $T_1<t_*$, from which we deduce that $N_1\ge 2$ and $\phi(T_1;f_0)\in \mathbb{B}_{E_\xi}(f_0,R)$. Consequently, by \eqref{FE2},
\begin{align*}
	\|\phi(T_1;f_1) - f_0\|_{E_\xi} & \le \|\phi(T_1;f_1) - \phi(T_1; f_0)\|_{E_\xi} + \|\phi(T_1;f_0) - f_0\|_{E_\xi} \\
	& < \kappa_0 \|f_1-f_0\|_{E_\xi} + R \le R \kappa_0^{2-N_1} + R \le 2 R\,,
\end{align*}  
so that $\phi(T_1;f_1)\in \mathbb{B}_{E_\xi}(f_0,2R)$. Recalling \eqref{FE1}, one has $T_1<t^+(\phi(T_1;f_i))$ for $i=0,1$, while uniqueness of solutions to~\eqref{CP} entails that $\phi(t;\phi(T_1;f_i))=\phi(t+T_1;f_i)$ for $0\le t\le T_1$ and $i=0,1$. An application of \eqref{FE2} then gives
\begin{equation*}
	\|\phi(t+T_1;f_1)-\phi(t+T_1;f_0)\|_{E_\xi} \leq \kappa_0 \|\phi(T_1;f_1) - \phi(T_1;f_0)\|_{E_\xi} \le \kappa_0^2 \|f_1-f_0\|_{E_\xi}
\end{equation*}
for $t\in [0,T_1]$. If  $N_1=2$, then the properties~($\alpha$) and~($\beta$) are proved with $k_0 = \kappa_0^2$. Otherwise, we proceed by induction to deduce ($\alpha$) and ($\beta$) after $N_1-1$ iterations. In particular, the property~($\alpha$) implies that $(0,t_*)\times \mathbb{B}_{E_\xi}\big( f_0,R \kappa_0^{1-N_1} \big)$ is a neighborhood of $(t_0,f_0)$ in 
\begin{equation*}
	\mathcal{D}:=\{(t,f)\,:\, 0\le t<t^+(f)\,,\,f\in E_\xi\}\,;
\end{equation*}
that is, $\mathcal{D}$ is open in $\mathbb{R}^+\times E_\xi$. Owing to this feature and~($\beta$), it is now immediate that the map $\phi\in C(\mathcal{D},E_\xi)$ defines a semiflow in $E_\xi$.

\bigskip

\noindent{\bf Step~4: Global Existence Criterion.} Let $f\in E_\xi$ and consider $0<t_0<T$ such that $t_0<t^+(f)$ and the corresponding solution $\phi=\phi(\cdot;f)$ to~\eqref{CP} satisfies
\begin{equation}\label{p2}
	\sup_{t_0<t<T\wedge t^+(f)}\| \phi(t)\|_{Y}<\infty\,.
\end{equation}
Assume for contradiction that $t^+(f)<T$. \Cref{L2} and \eqref{p2} then entail that 
\begin{equation*}
	\|\mathcal{K}(\phi(s))\|_{E_0}\le M_0\,,\qquad t_0<s< t^+(f)\,,
\end{equation*}
for some $M_0>0$. We readily obtain from \eqref{S} and \eqref{phi} that, for $t\in [t_0,t^+(f))$,
\begin{align*}
	\|\phi(t)\|_{E_\xi} & \le \|U(t-t_0) \phi(t_0)\|_{E_\xi} + \int_{t_0}^{t} \|U(t-s) \mathcal{K}(\phi)(s) \|_{E_\xi}\, \mathrm{d}s \\
	& \le c_0(T_0,\xi,\xi) \|\phi(t_0)\|_{E_\xi} + c_0(T_0,0,\xi) \int_{t_0}^{t} (t-s)^{-\xi} \|\mathcal{K}(\phi)(s)\|_{E_0}\, \mathrm{d}s \\
	& \le c_0(T_0,\xi,\xi) \|\phi(t_0)\|_{E_\xi} + \frac{c_0(T_0,0,\xi) M_0}{1-\xi} T_0^{1-\xi} =: R_0\,.
\end{align*}
{\bf Step~1} now implies that there is $T_1=T_1(R_0)>0$ such that $\phi$ exists at least on $[s,s+T_1]$ for every $t_0<s< t^+(f)$, contradicting the maximality of $t^+(f)$. Consequently, $t^+(f)\ge T$ as claimed and~\eqref{global} is proved.

Moreover, if \eqref{K1b} and \eqref{global1bb} are valid, then it follows from \Cref{L1} and \Cref{L2} that, for any $T>0$, there is $M(T)>0$ such that
\begin{equation*}
	\|\mathcal{K}(\phi(t))\|_{E_0}\le M(T) \|\phi(t)\|_{E_\theta}\,,\quad 0<t<T\wedge t^+(f)\,.
\end{equation*}
Combining this estimate with \eqref{S} and \eqref{phi}, we find, for $t\in (0,T\wedge t^+(f))$,
\begin{align*}
	\|\phi(t)\|_{E_\theta} & \le \|U(t) f\|_{E_\theta} + \int_0^t \|U(t-s) \mathcal{K}(\phi)(s)\|_{E_\theta}\, \mathrm{d}s \\
	& \le c_0(T,0,\theta) t^{-\theta} \|f\|_{E_0} + c_0(T,0,\theta) M(T) \int_0^t (t-s)^{-\theta} \|\phi(s)\|_{E_\theta}\, \mathrm{d}s\,.
\end{align*}
We then apply the singular Gronwall inequality, see \cite[II.Theorem~3.3.1]{Ama1995} or \cite[Lemma~7.1.1]{Hen1981} to conclude that $\phi$ satisfies \eqref{global} (with any $t_0\in \big(0,T\wedge t^+(f) \big)$). Consequently, $t^+(f)\ge T$ and, as $T>0$ is arbitrary, $t^+(f)=\infty$.

\bigskip

\noindent{\bf Step~5: Positivity.} We finally provide a proof of the non-negativity of solutions to~\eqref{CP} emanating from non-negative initial values. To this end, let us first note that \eqref{K1} and \Cref{L1} imply that, for $\psi\in E_\theta^+$,
\begin{align*}
	0\le \int_0^\infty k(x,y)\psi(y)\,\mathrm{d}y & \le \int_0^\infty k_* \frac{\ell(x)\ell(y)}{x+y+(x+y)^m} \psi(y)\,\mathrm{d}y \\
	& \le k_*\|\psi\|_{Y} \frac{\ell(x)}{x+x^m}  \\
	&\le c_3 \|\psi\|_{E_\theta} \frac{\ell(x)}{x+x^m} \,,\quad x>0\,.
\end{align*}
Assume first that the initial value $f$ belongs to $E_\theta^+$ and fix $T\in (0,t^+(f))$. By {\bf Step~1},
$$
\phi=\phi(\cdot;f)\in C([0,t^+(f)),E_\theta)
$$
and we may pick $R\ge \|\phi\|_{L_\infty((0,T),E_\theta)}$. Setting
\begin{equation*}
	V(x):= c_3 R\frac{\ell(x)}{x+x^m}\,,\quad x>0\,,
\end{equation*}
the above upper bound entails that, for $\psi\in \bar{\mathbb{B}}_{E_\theta}(0,R) \cap E_\theta^+$ and $x\in (0,\infty)$,
\begin{equation*}
	\big[ V\psi + \mathcal{K}(\psi) \big](x) \ge (V\psi)(x) - \psi(x) \int_0^\infty k(x,y) \psi(y)\, \mathrm{d}y \ge 0 \,.
\end{equation*}
Consider now the Cauchy problem
\begin{equation}\label{CP2}
	\frac{\mathrm{d}\psi}{\mathrm{d}t} = \mathbb{A}_V \psi + \mathcal{K}(\psi) + V\psi\,,\quad t>0\,,\qquad \psi(0)=f\,,
\end{equation}
with $\mathbb{A}_V=\mathbb{A}-V$. Since $\mathbb{A}_V$ generates a positive analytic semigroup on $E_0$ according to \Cref{L4} and 
\begin{equation*}
	\|V\psi\|_{E_0} \le c_3 R \|\psi\|_{Y}\,, \qquad \psi\in Y\,,
\end{equation*}
we may proceed as in {\bf Step~1} (with $\xi=\theta$) to obtain a unique mild solution $\psi\in C([0,T_1],E_\theta)$ to~\eqref{CP2} for some $T_1>0$, which satisfies additionally $\psi(t)\in E_\theta^+$ for $t\in [0,T_1]$. Clearly, $\psi=\phi$ on $[0,T\wedge T_1]$ by uniqueness of  mild solutions to~\eqref{CP}, and we iterate this argument to show that $\phi(t)\in E_\theta^+$ for $t\in [0,T]$. Since $T<t^+(f)$, the proof is complete for initial values in $E_\theta^+$. 
	
Finally, the just established non-negativity extends to arbitrary initial values in $E_\xi^+$ with $\xi\in (0,\theta)$ by a density argument, thanks to the continuous dependence of $\phi(\cdot;f)$ on the initial value $f$ established in {\bf Step~3}.
\end{proof}

\section{Global Existence}\label{sec3}

In this section, the rate coefficients $a$, $b$, and $k$ are assumed to satisfy \eqref{A.0}, \eqref{B.0}, \eqref{B.10}, \eqref{K1b}, and \eqref{K2} with parameters $0\le \theta_0 <\theta< 1$, $\theta_0\in [0,1/2]$, and $m>1$. We recall that \eqref{K1b} implies that $k$ satisfies also \eqref{K1} with $k_*=2K_*$. Throughout this section, $\kappa$ and $(\kappa_i)_{i\ge 1}$ denote positive constants depending only on $a$, $b$, $k$, $\theta_0$, and $\theta$. Dependence upon additional parameters will be indicated explicitly.

\bigskip

Let $r>1$. We first recall that, owing to \eqref{B.10}, 
\begin{equation}
	\delta_r := \inf_{y>0}\left\{ 1 - \frac{1}{y^r} \int_0^y x^r b(x,y)\, \mathrm{d}x \right\} \in (0,1)\,, \label{GBE0}
\end{equation}
with $\delta_r \in [\delta_2,1)$ for $r\ge 2$ and $\delta_r\in [1-(1-\delta_2)^{r-1},1)$ for $r\in (1,2)$, see \cite[Theorem~5.1.47]{BLL2020a}. We next define the weight 
\begin{equation*}
	w_r(x) := \left\{
	\begin{array}{ll}
		\displaystyle{\frac{r-1}{2} x^3}\,, & x\in [0,1]\,, \\[2ex]
		\displaystyle{x^r + \frac{r-3}{2} x}\,, & x\in (1,\infty)\,,
	\end{array}
	\right.
\end{equation*}
and notice that $w_r\in C^1([0,\infty))\cap C^2((0,\infty)\setminus\{1\})$ is nonnegative and increasing on $[0,\infty)$. Moreover, 
\begin{equation}
	\frac{x+x^r}{2} \le w_r(x) + r x\,, \quad w_r(x) \le x^r + rx\,, \qquad x\in (0,\infty)\,. \label{GK100}
\end{equation}

We now estimate how $w_r$ acts on the three mechanisms (fragmentation, diffusion, and coagulation) involved in \eqref{FD.0} and begin with the contribution of the fragmentation term. 

\begin{lemma}\label{LGE1}
Consider $\psi\in E_1^+$ and $r\in (1,m]$. Then
\begin{equation*}
	\int_0^\infty w_r(x) \mathcal{F}(\psi)(x)\, \mathrm{d}x \le - \delta_r \int_1^\infty x^r a(x) \psi(x)\, \mathrm{d}x + \int_1^\infty x a(x) \psi(x)\, \mathrm{d}x\,.
\end{equation*}
\end{lemma}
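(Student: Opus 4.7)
My plan is to recast the integral via Fubini's theorem as a single integral against the fragmentation measure and then establish a pointwise bound on the resulting integrand. Exchanging the order of integration in the gain term gives
\begin{equation*}
\int_0^\infty w_r(x)\mathcal{F}(\psi)(x)\,\mathrm{d}x = \int_0^\infty a(y)\psi(y)\,G_r(y)\,\mathrm{d}y\,, \qquad G_r(y):=-w_r(y)+\int_0^y w_r(x)b(x,y)\,\mathrm{d}x\,.
\end{equation*}
Since $a\ge 0$ and $\psi\ge 0$, the lemma reduces to showing that $G_r(y)\le 0$ for $y\in (0,1]$ and $G_r(y)\le -\delta_r y^r + y$ for $y>1$.

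For $y\in (0,1]$, both $x$ and $y$ lie in $[0,1]$, so $w_r(x)=\frac{r-1}{2}x^3$, and one may use the elementary estimate $x^3\le y^2 x$. Combined with \eqref{B.0}, this yields
\begin{equation*}
\int_0^y w_r(x)b(x,y)\,\mathrm{d}x \le \frac{r-1}{2}\,y^2 \int_0^y x\, b(x,y)\,\mathrm{d}x = \frac{r-1}{2}\,y^3 = w_r(y)\,,
\end{equation*}
so $G_r(y)\le 0$. For $y>1$, the key step is to show that
\begin{equation*}
w_r(x)\le x^r + \frac{r-1}{2}\,x\,, \qquad x>0\,,
\end{equation*}
which I would verify piecewise: on $[0,1]$ it reduces to $\frac{r-1}{2}\,x(x^2-1)\le x^r$, where the left-hand side is non-positive, and on $(1,\infty)$ it simplifies to $\frac{r-3}{2}\le\frac{r-1}{2}$. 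Plugging this uniform bound into the formula for $G_r$, together with \eqref{B.0} and the definition \eqref{GBE0} of $\delta_r$, produces
\begin{equation*}
\int_0^y w_r(x)b(x,y)\,\mathrm{d}x \le (1-\delta_r)\,y^r + \frac{r-1}{2}\,y\,,
\end{equation*}
and since $w_r(y)=y^r+\frac{r-3}{2}\,y$ for $y>1$, a direct subtraction gives $G_r(y)\le -\delta_r y^r + y$, as required.

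I do not anticipate a major obstacle: the argument is careful bookkeeping with the piecewise weight $w_r$. The only subtle point is that the uniform bound $w_r(x)\le x^r+\frac{r-1}{2}x$ alone is too weak on small sizes — it would only yield $G_r(y)=O(y)$ on $(0,1]$ rather than $G_r(y)\le 0$ — so the two regimes must be handled with different comparators: the $x^3$-enhancement afforded by $x,y\le 1$ on small sizes, and the $(1-\delta_r)y^r$-cancellation coming from \eqref{GBE0} on large sizes. The exact matching between $w_r(1^-)$ and $w_r(1^+)$ is what makes the coefficient $+y$ (and not something larger) appear on the right-hand side.
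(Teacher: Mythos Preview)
Your proof is correct and follows essentially the same approach as the paper: Fubini to reduce to an integral over $y$ with kernel $G_r(y)$, the bound $x^3\le y^2 x$ together with \eqref{B.0} for $y\in(0,1]$, and the combination of \eqref{B.0} and \eqref{GBE0} for $y>1$. The only cosmetic difference is that for $y>1$ you use the single uniform bound $w_r(x)\le x^r+\tfrac{r-1}{2}x$ on all of $(0,\infty)$, whereas the paper splits the inner integral at $x=1$ and reaches the same conclusion via the inequality $\tfrac{r-1}{2}x^3-\tfrac{r-3}{2}x\le x$ on $(0,1)$; your packaging is slightly tidier but the content is identical.
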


\begin{proof}
Owing to the definition of $w_r$,
\begin{align*}
	\int_0^\infty w_r(x) \mathcal{F}(\psi)(x)\, \mathrm{d}x & = - \frac{r-1}{2} \int_0^1 a(y) \psi(y) \left[ y^3 - \int_0^y x^3 b(x,y)\, \mathrm{d}x \right]\, \mathrm{d}y \\
	& \qquad - \int_1^\infty a(y) \left( y^r + \frac{r-3}{2} y \right) \psi(y)\, \mathrm{d}y \\
	& \qquad + \frac{r-1}{2} \int_1^\infty a(y) \psi(y) \int_0^1 x^3 b(x,y)\, \mathrm{d}x\mathrm{d}y \\
	& \qquad + \int_1^\infty a(y) \psi(y) \int_1^y \left( x^r + \frac{r-3}{2} x \right) b(x,y)\, \mathrm{d}x \mathrm{d}y\,.
\end{align*}
By \eqref{B.0}, 
\begin{equation*}
	\int_0^y x^3 b(x,y)\,\mathrm{d}x \le y^2 \int_0^y x b(x,y)\, \mathrm{d}x \le y^3\,, \qquad y>0\,,
\end{equation*}
which, together with the constraint $r>1$ and the non-negativity of $\psi$, implies that the first term on the right-hand side of the above identity is non-positive. Consequently, using once more \eqref{B.0}, as well as \eqref{GBE0},
\begin{align*}
	\int_0^\infty w_r(x) \mathcal{F}(\psi)(x)\, \mathrm{d}x & \le - \int_1^\infty a(y) y^r \psi(y)\, \mathrm{d}y - \frac{r-3}{2} \int_1^\infty a(y) \psi(y) \int_0^1 x b(x,y)\, \mathrm{d}x \mathrm{d}y \\
	& \qquad + \frac{r-1}{2} \int_1^\infty a(y) \psi(y) \int_0^1 x^3 b(x,y)\, \mathrm{d}x\mathrm{d}y \\
	& \qquad + \int_1^\infty a(y) \psi(y) \int_0^y x^r b(x,y)\, \mathrm{d}x \mathrm{d}y - \int_1^\infty a(y) \psi(y) \int_0^1 x^r b(x,y)\, \mathrm{d}x \mathrm{d}y\\
	& = - \int_1^\infty a(y) \psi(y) \left[ y^r - \int_0^y x^r b(x,y)\, \mathrm{d}x \right]\, \mathrm{d}y \\
	& \qquad + \int_1^\infty a(y) \psi(y) \int_0^1 \left( \frac{r-1}{2} x^3 - \frac{r-3}{2} x - x^r \right) b(x,y)\, \mathrm{d}x\mathrm{d}y \\
	& \le - \delta_r \int_1^\infty a(y) y^r \psi(y)\, \mathrm{d}y \\
	& \qquad + \int_1^\infty a(y) \psi(y) \int_0^1 \left( \frac{r-1}{2} x^3 - \frac{r-3}{2} x  \right) b(x,y)\, \mathrm{d}x\mathrm{d}y\,.
\end{align*}
Since $r>1$, another use of \eqref{B.0} gives
\begin{equation*}
	\int_0^1 \left( \frac{r-1}{2} x^3 - \frac{r-3}{2} x \right) b(x,y)\, \mathrm{d}x \le \int_0^1 x b(x,y)\, \mathrm{d}x \le y\,, \qquad y>1\,,
\end{equation*}
which completes the proof. 
\end{proof}
	
We now turn to the contribution of the diffusion term.

\begin{lemma}\label{LGE2}
	Consider $\psi\in E_1^+$ and $r\in (1,m]$. Then
	\begin{equation*}
		- \int_0^\infty w_r(x) \partial_x^2\psi(x)\, \mathrm{d}x \ge -3r M_1(\psi) - r^2 \int_1^\infty x^{r-2} \psi(x)\, \mathrm{d}x\,.
	\end{equation*}
\end{lemma}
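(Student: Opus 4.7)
The plan is to integrate by parts twice, being careful with the fact that $w_r$ is only $C^1$ (and not $C^2$) at the junction $x=1$. First, I would justify that all boundary contributions at $x=0$ and at $x=\infty$ vanish: at $x=0$ this is immediate from $w_r(0)=w_r'(0)=0$, while at $x=\infty$ the decay of $\psi$ and $\partial_x\psi$ forced by $\psi\in E_1=\dom(\mathbb{A})$---a property already invoked in the proof of \Cref{L4} via \cite[Lemma~2.1]{LaWa2021}---is enough because $w_r(x)\sim x^r$ and $w_r'(x)\sim r x^{r-1}$ with $r\le m$.

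With those boundary terms gone, a first integration by parts gives
\begin{equation*}
-\int_0^\infty w_r(x)\,\partial_x^2\psi(x)\,\mathrm{d}x = \int_0^\infty w_r'(x)\,\partial_x\psi(x)\,\mathrm{d}x\,,
\end{equation*}
and a second integration by parts, performed separately on $(0,1)$ and $(1,\infty)$ so as to avoid the jump of $w_r''$, produces two boundary terms at $x=1$ that cancel because $w_r\in C^1([0,\infty))$ implies $w_r'(1^-)=w_r'(1^+)=\tfrac{3(r-1)}{2}$. Recalling that $w_r''(x)=3(r-1)x$ on $(0,1)$ and $w_r''(x)=r(r-1)x^{r-2}$ on $(1,\infty)$, this computation delivers the exact identity
\begin{equation*}
-\int_0^\infty w_r(x)\,\partial_x^2\psi(x)\,\mathrm{d}x = -3(r-1)\int_0^1 x\,\psi(x)\,\mathrm{d}x - r(r-1)\int_1^\infty x^{r-2}\,\psi(x)\,\mathrm{d}x\,.
\end{equation*}

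Finally, since $\psi\ge 0$ and $r>1$, the elementary inequalities $3(r-1)\le 3r$ and $r(r-1)\le r^2$, combined with $\int_0^1 x\,\psi(x)\,\mathrm{d}x \le M_1(\psi)$, yield the announced lower bound. The only non-routine issue is justifying the vanishing of the boundary contributions at infinity, but this is handled exactly as in the paper's earlier arguments and presents no real obstacle here.
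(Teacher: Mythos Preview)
Your proposal is correct and follows essentially the same route as the paper: two integrations by parts with the boundary terms at $0$ and $\infty$ killed respectively by $w_r(0)=w_r'(0)=0$ and the decay provided by \cite[Lemma~2.1]{LaWa2021}, leading to the exact identity $-\int_0^\infty w_r\partial_x^2\psi = -3(r-1)\int_0^1 x\psi - r(r-1)\int_1^\infty x^{r-2}\psi$, followed by the same trivial relaxation of the constants. The only cosmetic difference is that you split the second integration by parts at $x=1$ and check the matching of $w_r'(1^\pm)$ explicitly, whereas the paper simply writes $\int_0^\infty \psi\,\partial_x^2 w_r$ directly (legitimate since $w_r\in C^1$ and $w_r''$ is defined a.e.).
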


\begin{proof}
First, recalling that
\begin{equation*}
	\partial_x\psi(x) = - \int_x^\infty \partial_y^2 \psi(y)\, \mathrm{d}y\,, \qquad \psi(x) = \int_x^\infty (y-x) \partial_y^2 \psi(y)\, \mathrm{d}y\,, \qquad x\in (0,\infty)\,,
\end{equation*}	
by \cite[Lemma~2.1]{LaWa2021}, the properties $\psi\in E_1$ and $r\in (1,m]$ ensure that
\begin{equation*}
	\lim_{x\to\infty} |w_r(x) \partial_x\psi(x)| \le (1+r) \lim_{x\to\infty} \int_x^\infty y^r |\partial_y^2 \psi(y)|\, \mathrm{d}y  = 0
\end{equation*}
and
\begin{equation*}
	\lim_{x\to\infty} |\partial_x w_r(x) \psi(x)| \le r \lim_{x\to\infty} \int_x^\infty y \, (1+y^{r-1}) |\partial_y^2 \psi(y)|\, \mathrm{d}y  = 0\,.
\end{equation*}
Therefore, since $w_r\in C^1([0,\infty))$ with $w_r(0)=\partial_x w_r(0)=0$, 
\begin{align*}
	- \int_0^\infty w_r(x) \partial_x^2\psi(x)\, \mathrm{d}x & = - \Big[ w_r(x) \partial_x\psi(x) \Big]_{x=0}^{x=\infty} + \Big[ \partial_x w_r(x) \psi(x) \Big]_{x=0}^{x=\infty}  - \int_0^\infty \psi(x) \partial_x^2 w_r(x)\, \mathrm{d}x \\
	& = - 3 (r-1) \int_0^1 x\psi(x)\, \mathrm{d}x - r(r-1) \int_1^\infty x^{r-2} \psi(x)\, \mathrm{d}x \\
	& \ge -3r M_1(\psi) - r^2 \int_1^\infty x^{r-2} \psi(x)\, \mathrm{d}x\,,
\end{align*}
as claimed. 
\end{proof}

We finally estimate the contribution of the nonlinear coagulation term which is, without much surprise, harder to handle.

\begin{lemma}\label{LGE3}
	Consider $\psi\in E_1^+$ and $r\in (1,m]$. Then
	\begin{align*}
		\int_0^\infty w_r(x) \mathcal{K}(\psi)(x)\, \mathrm{d}x & \le \kappa_1(r) \left( M_1(\psi) + M_{1+(r-2)_+}(\psi) \right) \int_1^\infty x^r (1+a(x))^\theta \psi(x)\, \mathrm{d}x  \\
		& \qquad + \kappa_1(r) M_1(\psi)^2\,.
	\end{align*}
\end{lemma}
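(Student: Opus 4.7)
The plan is to start from the classical symmetrization
\begin{equation*}
\int_0^\infty w_r(x) \mathcal{K}(\psi)(x)\, \mathrm{d}x = \frac{1}{2}\int_0^\infty\int_0^\infty \Phi_r(x,y)\, k(x,y)\, \psi(x)\psi(y)\, \mathrm{d}x\, \mathrm{d}y,
\end{equation*}
with $\Phi_r(x,y) := w_r(x+y) - w_r(x) - w_r(y)$, obtained from the bilinear form of $\mathcal{K}$ by Fubini's theorem. The properties $w_r \in C^1([0,\infty))$ with $w_r(0) = w_r'(0) = 0$ and $w_r'' \ge 0$ piecewise on $(0,\infty)$ yield the representation $\Phi_r(x,y) = \int_0^x\int_0^y w_r''(t+u)\,\mathrm{d}u\,\mathrm{d}t \ge 0$. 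Since $w_r''$ is bounded by $\kappa(r)$ on $[0,2]$ and equal to $r(r-1)s^{r-2}$ for $s>1$, this gives both $\Phi_r(x,y) \le \kappa(r)xy\bigl(1+(x+y)^{(r-2)_+}\bigr)$ and, via the elementary bound $\Phi_r \le r\min(x,y)(x+y)^{r-1}$ (which follows from $(x+y)^r - y^r \le rx(x+y)^{r-1}$ by the mean-value theorem whenever $w_r$ coincides with $z \mapsto z^r +$ linear terms), the inequality $\Phi_r \le r\,2^{r-1}xy^{r-1}$ on $\{x\le y\}\cap\{x+y>1\}$.

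I would then decompose $(0,\infty)^2 = A \cup B \cup C$ with $A := (0,1)^2$, $B := \bigl((0,1)\times(1,\infty)\bigr)\cup\bigl((1,\infty)\times(0,1)\bigr)$, and $C := (1,\infty)^2$, and estimate each region separately. On $A$, the estimates $y+y^m\le 2y$ and $x+x^m\le 2x$ together with $\theta_0\le 1/2$ reduce \eqref{K1b} to $k \le 4K_*$; combined with $\Phi_r \le \kappa(r)xy$ this integrates to $\le \kappa(r)M_1(\psi)^2$. On $B_1 := (0,1)\times(1,\infty)$, the bounds $y+y^m\le 2y^m$, $x+y+(x+y)^m\ge y^m$ and $\ell(y) = (1+a(y))^\theta y^m$ simplify \eqref{K1b} to $k \le 2K_*[x^{1-2\theta_0} + x(1+a(y))^\theta]$; this together with $\Phi_r \le \kappa(r)xy^{1+(r-2)_+}$, the inequality $x^{2-2\theta_0}\le x$ on $(0,1)$ (valid since $\theta_0\le 1/2$), and $y^{1+(r-2)_+}\le y^r$ on $(1,\infty)$ produces a contribution $\le \kappa(r)\bigl(M_1(\psi)^2 + M_1(\psi) J(\psi)\bigr)$, where $J(\psi) := \int_1^\infty x^r(1+a(x))^\theta\psi(x)\,\mathrm{d}x$ denotes the dangerous moment. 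The region $B_2$ is treated symmetrically.

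The main obstacle is region $C$: I would invoke the sharper hypothesis \eqref{K3} (which is the one in force under the assumptions of \Cref{th2}) to obtain $k(x,y)\le k_0\tfrac{xy}{x+y}\bigl[(1+a(x))^\theta+(1+a(y))^\theta\bigr]$, so that $\Phi_r k \le \kappa(r)\tfrac{x^2y^r}{x+y}\bigl[(1+a(x))^\theta+(1+a(y))^\theta\bigr]$ on $\{x\le y\}\cap C$. The subtle step is the choice of interpolation between $\tfrac{x^2y^r}{x+y}\le x^r y$ (valid for $r\in(1,2]$ when $x\le y$, since $x^{2-r}y^{r-1}\le y\le x+y$) and $\tfrac{x^2y^r}{x+y}\le x^2 y^{r-1}$ (using $x+y\ge y$, valid for all $r$). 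For $r\in(1,2]$, the first bound handles the $(1+a(x))^\theta$ piece by $\int_1^\infty x^r(1+a(x))^\theta\psi(x)\int_x^\infty y\psi(y)\,\mathrm{d}y\,\mathrm{d}x \le M_1(\psi)J(\psi)$, and the $(1+a(y))^\theta$ piece via the further estimate $\int_1^y x^r\psi \le y^{r-1}M_1(\psi)$ (from $x^r\le xy^{r-1}$ on $\{x\le y\}$), again yielding $M_1(\psi)J(\psi)$. For $r>2$, the second bound handles the $(1+a(y))^\theta$ piece by $\int_1^y x^2\psi \le yM_1(\psi)$ (from $x^2\le yx$), and the $(1+a(x))^\theta$ piece by $x^2\le x^r$ on $(1,\infty)$ together with $\int_x^\infty y^{r-1}\psi \le M_{r-1}(\psi)$, giving $M_{r-1}(\psi)J(\psi)$. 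The symmetric half $\{y\le x\}\cap C$ is treated identically. Summing the three regions yields the claim, the hardest aspect being the coordination in region $C$ of the interpolation choice with the case $r\le 2$ versus $r>2$ and with the location of the $(1+a)^\theta$ factor, so that the dangerous weight is always paired with $J(\psi)$ while the clean factor is absorbed into $M_1(\psi)$ or $M_{r-1}(\psi)$.
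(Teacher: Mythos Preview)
Your proof is correct and follows essentially the same route as the paper: the same symmetrization, the same three-region split $(0,1)^2$, $(0,1)\times(1,\infty)$, $(1,\infty)^2$, the same reliance on \eqref{K3} for the large--large region together with $\theta_0\le 1/2$ for the small-size contributions, and the same $r\le 2$ versus $r>2$ case distinction. The only differences are cosmetic: the paper obtains the bound on $\Phi_r$ over $(1,\infty)^2$ by citing \cite[Lemma~7.4.4]{BLL2020b} for $(x+y)^r-x^r-y^r\le c\,(x^ry+xy^r)/(x+y)$ rather than your mean-value estimate $\Phi_r\le r\min(x,y)(x+y)^{r-1}$, and it invokes \eqref{K1} rather than \eqref{K1b} on the mixed and small--small regions.
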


\begin{proof}
A standard computation gives
\begin{equation*}
	\int_0^\infty w_r(x) \mathcal{K}(\psi)(x)\, \mathrm{d}x = \frac{1}{2} \int_0^\infty \int_0^\infty [w_r(x+y)-w_r(x)-w_r(y)] k(x,y) \psi(x) \psi(y)\, \mathrm{d}y\mathrm{d}x\,,
\end{equation*}
and we proceed differently for the contributions of large sizes, of small sizes, and the interactions between small and large sizes. We start with the former and infer from \eqref{K3} and \cite[Lemma~7.4.4]{BLL2020b} that
\begin{align*}
	I_{1}  &:= \frac{1}{2} \int_1^\infty \int_1^\infty [w_r(x+y)-w_r(x)-w_r(y)] k(x,y) \psi(x) \psi(y)\, \mathrm{d}y\mathrm{d}x \\
	& = \frac{1}{2} \int_1^\infty \int_1^\infty \big[ (x+y)^r - x^r - y^r \big] k(x,y) \psi(x) \psi(y)\, \mathrm{d}y\mathrm{d}x \\
	& \le \frac{\max\{r , 2^r-2\}}{2} \int_1^\infty \int_1^\infty \frac{x^r y + x y^r}{x+y} k(x,y) \psi(x) \psi(y)\, \mathrm{d}y\mathrm{d}x \\
	& \le \max\{r , 2^r-2\} k_0 \int_1^\infty \int_1^\infty \left[ \frac{x^{r+1} y^2}{(x+y)^2} + \frac{x^2 y^{r+1}}{(x+y)^2} \right] (1+a(x))^\theta \psi(x) \psi(y)\, \mathrm{d}y\mathrm{d}x \\
	& \le 2^r k_0 \int_1^\infty \int_1^\infty \left[ x^{r} y + x^2 y (x+y)^{r-2} \right] (1+a(x))^\theta \psi(x) \psi(y)\, \mathrm{d}y\mathrm{d}x \,.
\end{align*}
At this point, either $r\in (1,2]$ and 
\begin{subequations}\label{GK3}
\begin{align}
	I_1 & \le 2^r k_0\int_1^\infty \int_1^\infty \left[ x^{r} y + x^r y \right] (1+a(x))^\theta \psi(x) \psi(y)\, \mathrm{d}y\mathrm{d}x \nonumber\\
	& \le 2^{r+1} k_0 M_1(\psi) \int_1^\infty x^r (1+a(x))^\theta \psi(x)\, \mathrm{d}x\,. \label{GK3a}
\end{align}
Or $r>2$ and it follows from the basic inequality 
\begin{equation*}
	(x+y)^{r-2} \le 2^{r-2} (x^{r-2}+y^{r-2})\,, \qquad (x,y)\in (0,\infty)^2\,,
\end{equation*} 
that
\begin{align}
	I_1 & \le 2^r k_0 \int_1^\infty \int_1^\infty \left[ x^{r} y + 2^{r-2} \big( x^r y +x^2 y^{r-1} \big) \right] (1+a(x))^\theta \psi(x) \psi(y)\, \mathrm{d}y\mathrm{d}x \nonumber\\
	& \le 2^{2r-1} k_0 \int_1^\infty \int_1^\infty \left[ x^{r} y^{r-1} + x^r y^{r-1} \big) \right] (1+a(x))^\theta \psi(x) \psi(y)\, \mathrm{d}y\mathrm{d}x \nonumber\\
	& \le 4^r k_0 M_{r-1}(\psi) \int_1^\infty x^r (1+a(x))^\theta \psi(x)\, \mathrm{d}x \,. \label{GK3b}
\end{align}
\end{subequations}
We now study the contribution involving only small sizes. To this end, we observe that 
\begin{equation*}
	\partial_x^2 w_r(z) \le \kappa_2(r) z \;\;\text{ for a.a. }\;\; z\in (0,2)
\end{equation*} 
with $\kappa_2(r) := r(r+3) 2^{(r-3)_+}$, so that, for $(x,y)\in (0,1)^2$, 
\begin{equation*}
	w_r(x+y) - w_r(x) -w_r(y) = \int_0^x \int_0^y \partial_x^2 w_r(x_*+y_*)\, \mathrm{d}y_*\mathrm{d}x_* \le \kappa_2(r)  xy (x+y)\,.
\end{equation*}
Therefore, using also \eqref{K1},
\begin{align*}
	I_{2} & := \frac{1}{2} \int_0^1 \int_0^1 [w_r(x+y)-w_r(x)-w_r(y)] k(x,y) \psi(x) \psi(y)\, \mathrm{d}y\mathrm{d}x \\
	& \le \frac{ \kappa_2(r)}{2} \int_0^1 \int_0^1 xy (x+y) k(x,y) \psi(x) \psi(y)\, \mathrm{d}y\mathrm{d}x \\
	& \le  \kappa_2(r) k_* \int_0^1 \int_0^1 x^{2-2\theta_0} y^{2-2\theta_0} \frac{x+y}{x+y + (x+y)^m} \psi(x) \psi(y)\, \mathrm{d}y\mathrm{d}x \\
	& \le  \kappa_2(r) k_* \left( \int_0^1 x^{2-2\theta_0} \psi(x)\, \mathrm{d}x \right)^2\,.
\end{align*}
Since $\theta_0\in [0,1/2]$, we have $x^{2-2\theta_0} \le x$ for $x\in (0,1)$ and conclude that
\begin{equation}
	I_2 \le  \kappa_2(r) k_* M_1(\psi)^2\,. \label{GK5}
\end{equation}

We finally estimate the contribution to coagulation mixing small and large sizes. In that case, owing to the symmetry of $k$,
\begin{align*}
	I_{3} & := \frac{1}{2} \int_1^\infty \int_0^1 [w_r(x+y)-w_r(x)-w_r(y)] k(x,y) \psi(x) \psi(y)\, \mathrm{d}y\mathrm{d}x \\
	& \qquad + \frac{1}{2} \int_0^1\int_1^\infty [w_r(x+y)-w_r(x)-w_r(y)] k(x,y) \psi(x) \psi(y)\, \mathrm{d}y\mathrm{d}x \\
	& = \int_0^1 \int_1^\infty [w_r(x+y)-w_r(x)-w_r(y)] k(x,y) \psi(x) \psi(y)\, \mathrm{d}y\mathrm{d}x \,.
\end{align*}
Since 
\begin{align*}
	w_r(x+y) - w_r(x) - w_r(y) & = (x+y)^r - y^r + \frac{r-3}{2} x - \frac{r-1}{2} x^3 \\
	& \le (x+y)^r - x^r - y^r + x^r + \frac{r-3}{2} x \\
	& \le \max\{ r, 2^r-2\}\frac{x^r y + x y^r}{x+y} + r x
\end{align*}
for $(x,y)\in (0,1)\times (1,\infty)$ by \cite[Lemma~7.4.4]{BLL2020b}, we deduce from \eqref{K1} and the above inequality
\begin{align*}
	I_3 & \le 2^r k_* \int_0^1 \int_1^\infty \left( \frac{x^r y + x y^r}{x+y} +  x \right) \frac{x^{1-2\theta_0} (1+a(y))^\theta y^m}{x+y + (x+y)^m} \psi(x) \psi(y)\, \mathrm{d}y\mathrm{d}x \\
	& \le 2^r k_* \int_0^1 \int_1^\infty \left( \frac{2 x y^r}{y} +  x \right) x^{1-2\theta_0} (1+a(y))^\theta \psi(x) \psi(y)\, \mathrm{d}y\mathrm{d}x\,. 
\end{align*}
Recalling that $\theta_0\in [0,1/2]$, we further obtain
\begin{align}
	I_3 & \le 2^{r+2} k_* \int_0^1 \int_1^\infty x y^r (1+a(y))^\theta \psi(x) \psi(y)\, \mathrm{d}y\mathrm{d}x \nonumber \\
	& \le 2^{r+2} k_* M_1(\psi) \int_1^\infty y^r (1+a(y))^\theta \psi(y)\, \mathrm{d}y\,. \label{GK6}
\end{align}
Since
\begin{equation*}
	\int_0^\infty w_r(x) \mathcal{K}(\psi)(x)\, \mathrm{d}x  = I_1 + I_2 + I_3\,,
\end{equation*}
we collect \eqref{GK3}, \eqref{GK5}, and \eqref{GK6} to complete the proof of \Cref{LGE3}.
\end{proof}

Collecting the outcome of \Cref{LGE1}, \Cref{LGE2}, and \Cref{LGE3} leads us to the following estimate.

{\samepage
\begin{lemma}\label{LGE4}
Consider $\psi\in E_1^+$ and $r\in (1,m]$. Then
\begin{align*}
	& \int_0^\infty w_r(x) \left[ \mathbb{A}\psi(x) + \mathcal{K}(\psi)(x) \right]\, \mathrm{d}x \\
	& \qquad\qquad \le \kappa_3(r) \left( 1 + M_1(\psi) + M_{1+(r-2)_+}(\psi) \right)^{1/(1-\theta)} \left( 1 + M_1(\psi) + \int_1^\infty w_r(x) \psi(x)\, \mathrm{d}x \right)\,,
\end{align*}
recalling that $\theta$ is defined in \eqref{K1L}.
\end{lemma}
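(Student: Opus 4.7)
The plan is to add the upper bounds provided by \Cref{LGE1}, \Cref{LGE2}, and \Cref{LGE3}, and then to absorb every positive contribution involving the fragmentation rate $a$ into the negative term $-\delta_r J$ coming from \Cref{LGE1}, where I set $J := \int_1^\infty x^r a(x)\psi(x)\,\mathrm{d}x$ and $A := 1 + M_1(\psi) + M_{1+(r-2)_+}(\psi)$. Summing the three estimates would produce the schematic bound
\begin{align*}
	\int_0^\infty w_r(x) \big[\mathbb{A}\psi(x) + \mathcal{K}(\psi)(x)\big]\,\mathrm{d}x & \le -\delta_r J + \int_1^\infty x a(x) \psi(x)\,\mathrm{d}x + \kappa_1(r) A \int_1^\infty x^r (1+a(x))^\theta \psi(x) \,\mathrm{d}x \\
	& \quad + 3r M_1(\psi) + r^2 \int_1^\infty x^{r-2} \psi(x)\,\mathrm{d}x + \kappa_1(r) M_1(\psi)^2.
\end{align*}

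For the coagulation contribution I would invoke Young's inequality in the form $(1+s)^\theta \le \theta \mu (1+s) + (1-\theta)\mu^{-\theta/(1-\theta)}$ ($s \ge 0$, $\mu > 0$), applied with $s = a(x)$ and $\mu := \delta_r/[4\kappa_1(r) A \theta]$, tuned so that $\kappa_1(r) A \theta \mu = \delta_r/4$. The upshot would be
\begin{equation*}
	\kappa_1(r) A \int_1^\infty x^r (1+a(x))^\theta \psi(x)\,\mathrm{d}x \le \frac{\delta_r}{4} J + C A^{1/(1-\theta)} \int_1^\infty x^r \psi(x)\,\mathrm{d}x,
\end{equation*}
with $C = C(r, \theta, \delta_r, \kappa_1(r))$; the exponent $1/(1-\theta)$ in the final bound would arise precisely from this Young step.

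The main obstacle will be the linear bad term $\int_1^\infty x a(x) \psi(x)\,\mathrm{d}x$, as a direct Young argument $x \le \eta x^r + C_\eta$ fails here: it would demand control of $\int_1^\infty a(x)\psi(x)\,\mathrm{d}x$, which is not provided by the moments appearing on the target right-hand side. I would instead split at the threshold $X := (4/\delta_r)^{1/(r-1)}\ge 1$. On $(X,\infty)$ the pointwise bound $x \le x^r/X^{r-1} = (\delta_r/4) x^r$ yields $\int_X^\infty x a \psi\,\mathrm{d}x \le (\delta_r/4) J$, while on $(1, X)$ the local boundedness of $a$ from \eqref{A.0} combined with $\int_1^X \psi\,\mathrm{d}x \le M_1(\psi)$ gives $\int_1^X x a \psi\,\mathrm{d}x \le C M_1(\psi)$, so altogether $\int_1^\infty x a \psi\,\mathrm{d}x \le (\delta_r/4) J + C M_1(\psi)$.

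Combining these two absorption estimates with $-\delta_r J$ would leave the nonpositive residual $-\delta_r J/2$, which I drop. To cast the remaining contributions in the target form, I would use $x^{r-2} \le 1 + x^r$ on $[1, \infty)$, the identity $x^r = w_r(x) - (r-3)x/2$ on $(1, \infty)$ (giving $\int_1^\infty x^r \psi\,\mathrm{d}x \le \int_1^\infty w_r(x) \psi(x)\,\mathrm{d}x + C M_1(\psi)$), and $M_1(\psi)^2 \le (1+M_1(\psi))^{1/(1-\theta)}(1+M_1(\psi))$, which holds because $\theta > 0$ implies $1 + 1/(1-\theta) \ge 2$. All surviving terms then fit within $\kappa_3(r) A^{1/(1-\theta)}\big(1 + M_1(\psi) + \int_1^\infty w_r(x)\psi(x)\,\mathrm{d}x\big)$, completing the proof.
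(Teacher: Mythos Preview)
Your proposal is correct and follows essentially the same route as the paper's proof: sum the three lemmas, absorb the two $a$-dependent positive contributions into $-\delta_r J$ via a threshold splitting for $\int_1^\infty x a\psi$ and a Young inequality for the $(1+a)^\theta$ factor, then fit the leftovers into the target form. The only cosmetic differences are that the paper first bounds $(1+a)^\theta\le 1+a^\theta$ before applying Young's inequality (whereas you apply Young directly to $(1+a)^\theta$, which is equally valid and arguably cleaner), and that the paper estimates $\int_1^\infty x^{r-2}\psi\,\mathrm{d}x\le M_{1+(r-2)_+}(\psi)$ directly rather than via $x^{r-2}\le 1+x^r$; both choices lead to the same conclusion.
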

}

\begin{proof}
We infer from \Cref{LGE1}, \Cref{LGE2}, and \Cref{LGE3} that
\begin{equation}
\begin{split}
	& \int_0^\infty w_r(x) \left[ \mathbb{A}\psi(x) + \mathcal{K}(\psi)(x) \right]\, \mathrm{d}x \\
	& \qquad \le - \delta_r \int_1^\infty x^r a(x) \psi(x)\, \mathrm{d}x + \int_1^\infty x a(x) \psi(x)\, \mathrm{d}x \\
	& \qquad\qquad + 3r M_1(\psi) + r^2 \int_1^\infty x^{r-2} \psi(x)\, \mathrm{d}x\\	
	& \qquad\qquad + \kappa_1(r) \left( M_1(\psi) + M_{1+(r-2)_+}(\psi) \right) \int_1^\infty x^r (1+a(x))^\theta \psi(x)\, \mathrm{d}x + \kappa_1(r) M_1(\psi)^2\,.
\end{split} \label{GK110}
\end{equation}
On the one hand, it follows from \eqref{A.0} that, with $R_r^{r-1} := \max\{ 1, 2r/\delta_r\}$,
\begin{align}
	\int_1^\infty x a(x) \psi(x)\, \mathrm{d}x & \le \|a\|_{L_\infty(1,R_r)} \int_1^{R_r} x \psi(x)\, \mathrm{d}x + \frac{1}{R_r^{r-1}} \int_{R_r}^\infty x^r a(x) \psi(x)\, \mathrm{d}x \nonumber \\
	& \le \kappa(r) M_1(\psi) + \frac{\delta_r}{2} \int_{1}^\infty x^r a(x)  \psi(x)\, \mathrm{d}x \label{GK120}
\end{align}
and
\begin{equation}
	\int_1^\infty x^{r-2} \psi(x)\, \mathrm{d}x \le \int_1^\infty x^{1+(r-2)_+} \psi(x)\, \mathrm{d}x \le M_{1+(r-2)_+}(\psi)\,. \label{GK130}
\end{equation}
On the other hand, we infer from Young's inequality that
\begin{align*}
	& \kappa_1(r) \left( M_1(\psi) + M_{1+(r-2)_+}(\psi) \right) \int_1^\infty x^r (1+a(x))^\theta \psi(x)\, \mathrm{d}x \\
	& \qquad \le \kappa_1(r) \left( M_1(\psi) + M_{1+(r-2)_+}(\psi) \right) \int_1^\infty x^r \psi(x)\, \mathrm{d}x \\
	& \qquad\qquad + \kappa_1(r) \left( M_1(\psi) + M_{1+(r-2)_+}(\psi) \right) \int_{1}^\infty x^r a(x)^\theta \psi(x)\, \mathrm{d}x \\
	& \qquad \le \kappa_1(r) \left( M_1(\psi) + M_{1+(r-2)_+}(\psi) \right) \int_1^\infty x^r \psi(x)\, \mathrm{d}x \\
	& \qquad\qquad + \frac{\delta_r}{2} \int_{1}^\infty x^r a(x) \psi(x)\, \mathrm{d}x  \\	
	& \qquad\qquad + \left( \frac{2}{\delta_r} \right)^{\theta/(1-\theta)} \kappa_1(r)^{1/(1-\theta)} \left( M_1(\psi) + M_{1+(r-2)_+}(\psi) \right)^{1/(1-\theta)}\int_1^\infty x^r \psi(x)\, \mathrm{d}x \\
	& \qquad \le \frac{\delta_r}{2} \int_{1}^\infty x^r a(x) \psi(x)\, \mathrm{d}x + \kappa(r) \left( 1 + M_1(\psi) + M_{1+(r-2)_+}(\psi) \right)^{1/(1-\theta)} \int_1^\infty x^r \psi(x)\, \mathrm{d}x \,.
\end{align*}
Combining \eqref{GK110}, \eqref{GK120}, \eqref{GK130}, and the above inequality leads us to 
\begin{align*}
	& \int_0^\infty w_r(x) \left[ \mathbb{A}\psi(x) + \mathcal{K}(\psi)(x) \right]\, \mathrm{d}x \\
	& \qquad\qquad \le \kappa(r) \left( 1 + M_1(\psi) + M_{1+(r-2)_+}(\psi) \right)^{1/(1-\theta)} \left( 1 + \int_1^\infty x^r \psi(x)\, \mathrm{d}x \right)\,.
\end{align*}
We finally use \eqref{GK100} to complete the proof.
\end{proof}

To finish off the proof of \Cref{th2}, we shall use \Cref{LGE4} to derive time-dependent bounds on the solutions to \eqref{FD.0} in $E_0$. Bearing in mind that the first moment of solutions to \eqref{FD.0} does not vary with time according to \eqref{M.102}, it is then sufficient to obtain estimates in $L_1((0,\infty),w_m(x)\mathrm{d}x)$. Observing that the right-side of the inequality in \Cref{LGE4} depends linearly on $\psi$ when $r\in (1,2]$, a feature which is not available when $r>2$, an application of \Cref{LGE4} will only provide an estimate in $X_{\min\{m,2\}}$ in a first step. As this is not sufficient to conclude when $m>2$, we shall use an iterative procedure as in \cite{Ban2020} in that case.

\begin{proof}[Proof of \Cref{th2}: {$m\in (1,2]$}]
Let $\xi\in [0,1)$ with $2\theta<1+\xi$ and consider $f\in E_\xi^+$. According to \Cref{th1}, there is a unique classical solution $\phi=\phi(\cdot;f)$ to \eqref{FD.0} defined on $[0,t^+(f))$ and satisfying
\begin{equation}
	M_1(\phi(t)) = M_1(f) \le \|f\|_{E_0}\,, \qquad t\in [0,t^+(f))\,. \label{GK10} 
\end{equation}
Let $T>0$ and consider $t\in (0,T\wedge t^+(f))$. Then $\phi(t)\in E_1$ by \Cref{th1} and we infer from \eqref{FD.0}, \Cref{LGE4} with $r=m$, and \eqref{GK10} that
\begin{equation*}
	\frac{\mathrm{d}}{\mathrm{d}t} \int_0^\infty w_m(x) \phi(t,x)\, \mathrm{d}x  \le \kappa(\|f\|_{E_0}) \left( 1 + \int_0^\infty w_m(x) \phi(t,x)\, \mathrm{d}x \right)\,.
\end{equation*}
Integrating the above differential inequality and using \eqref{GK100} give
\begin{align*}
	\int_0^\infty w_m(x) \phi(t,x)\, \mathrm{d}x & \le \kappa(\|f\|_{E_0})  e^{\kappa(\|f\|_{E_0}) t} \left( 1 + \int_0^\infty w_m(x) f(x)\, \mathrm{d}x \right) \\
	& \le  \kappa(\|f\|_{E_0}) e^{\kappa(\|f\|_{E_0}) t} \left( 1 + m\|f \|_{E_0} \right)
\end{align*}
for $t\in (0,T\wedge t^+(f))$; that is, thanks to \eqref{GK100} and \eqref{GK10},
\begin{equation*}
	\|\phi(t)\|_{E_0} \le 2 \int_0^\infty \big( w_m(x) + mx \big) \phi(t,x)\, \mathrm{d}x \le \kappa(\|f\|_{E_0})  e^{\kappa(\|f\|_{E_0}) t}
\end{equation*} 
for $t\in (0,T\wedge t^+(f))$. We now infer from \Cref{th1}~\textbf{(d)} that $t^+(f)=\infty$.
\end{proof} 

To reach higher values of $m$, we proceed along the lines of \cite{Ban2020} and employ an iterative method. 

\begin{proof}[Proof of \Cref{th2}: $m>2$]
Keeping the notation used in the proof of  \Cref{th2} for $m\in (1,2]$, we introduce
\begin{equation*}
	\mu_r(T) := \sup_{t\in [0,T\wedge t^+(f))} \int_0^\infty (x+x^r) \phi(t,x)\, \mathrm{d}x \in (0,\infty]\,, \qquad T>0\,.
\end{equation*}
We first perform the same computation as in the previous proof with $m=2$ to obtain that
\begin{equation}
	\mu_2(T) < \infty\,, \qquad T>0\,. \label{GK20}
\end{equation}
We next take $r\in (2,m]$ and claim that, for each $T>0$,
\begin{equation}
	\mu_{r-1}(T) < \infty\ \Longrightarrow\ \mu_r(T) < \infty\,. \label{GK200}
\end{equation}
Thus suppose that $\mu_{r-1}(T) < \infty$ for some $T>0$. We then infer from \Cref{LGE4}, \eqref{GK10}, and \eqref{GK200} that, for $t\in (0,T\wedge t^+(f))$, 
\begin{align*}
		& \frac{\mathrm{d}}{\mathrm{d}t} \int_0^\infty w_r(x) \phi(t,x)\, \mathrm{d}x \\
		& \qquad \le \kappa_3(r) \left( 1 + M_1(\phi(t)) + M_{1+(r-2)_+}(\phi(t)) \right)^{1/(1-\theta)} \left( 1 + M_1(\phi(t)) + \int_1^\infty w_r(x) \phi(t,x)\, \mathrm{d}x \right) \\
		& \qquad \le \kappa_3(r) \left( 1 + M_1(f) + M_{r-1}(\phi(t)) \right)^{1/(1-\theta)} \left( 1 + M_1(f) + \int_1^\infty w_r(x) \phi(t,x)\, \mathrm{d}x \right) \\
		& \qquad \le \kappa(r,\|f\|_{E_0}) \left( 1 + \mu_{r-1}(T) \right)^{1/(1-\theta)}  \left( 1 + \int_0^\infty w_r(x) \phi(t,x)\, \mathrm{d}x \right)\,.
\end{align*}
After integrating with respect to time and using \eqref{GK100}, we end up with
\begin{equation*}
	\int_0^\infty (x + x^r) \phi(t,x)\, \mathrm{d}x \le \kappa(r,\|f\|_{E_0}) e^{\kappa(r,\|f\|_{E_0}) \left( 1 + \mu_{r-1}(T) \right)^{1/(1-\theta)} t}\,,  \qquad t\in (0,T\wedge t^+(f))\,. 
\end{equation*}
Consequently, recalling that $\mu_{r-1}(T) < \infty$,
\begin{equation*}
	\mu_r(T)<\infty\,,
\end{equation*}
and \eqref{GK200} is proved.

We now introduce $r_i := m - \lfloor m \rfloor + i$ for $1\le i \le \lfloor m \rfloor$ and note that $r_1\in [1,2]$. We then deduce from \eqref{GK10} and \eqref{GK20} that $\mu_{r_1}(T)<\infty$ for all $T>0$. We next apply \eqref{GK200} recursively to obtain that $\mu_{r_i}(T) < \infty$ for all $T>0$ and $1\le i \le \lfloor m\rfloor$. In particular, $\mu_{m}(T)  = \mu_{r_{\lfloor m\rfloor}}(T) < \infty$ for all $T>0$ and we have established \eqref{global1bb}, thereby completing the proof.
\end{proof} 

\section*{Acknowledgments}

This work was done while PhL enjoyed the kind hospitality and support of the Institut f\"ur Angewandte Mathematik, Leibniz Universit\"at Hannover.	

\bibliographystyle{siam}
\bibliography{CoagFragmDiff}
	
\end{document}